\newtheoremstyle{mytheoremstyle} 
    {5pt}                    
    {5pt}                    
    {\itshape}                   
    {\parindent}                           
    {\bf}                   
    {.}                          
    {.5em}                       
    {}  
\theoremstyle{mytheoremstyle}
\newtheorem{theorem}{Theorem}[section]
\newtheorem*{theoremnonumber}{Theorem}
\newtheorem{lemma}[theorem]{Lemma}
\newtheorem{theoremanddefinition}[theorem]{Theorem and Definition}
\newtheorem{definitionandlemma}[theorem]{Lemma and Definition}
\newtheorem{corollary}[theorem]{Corollary}
\newtheoremstyle{mytdefintionstyle} 
    {5pt}                    
    {5pt}                    
    {\rm}                   
    {\parindent}                           
    {\bf}                   
    {.}                          
    {.5em}                       
    {}  
\theoremstyle{remark}
\newtheorem{remark}[theorem]{Remark}
\newtheorem{caveat}[theorem]{Caveat}
\theoremstyle{mytdefintionstyle}
\newtheorem{definition}[theorem]{Definition}
\newtheorem{example}[theorem]{Example}
\newtheorem*{convention}{Convention}
\newtheorem{notation}[theorem]{Notation}
\newtheoremstyle{exmp_contd} 
{\topsep} {\topsep}%
{\upshape}
{}
{\bfseries}
{}
{ }
{\thmname{#1}\,\thmnumber{ #2}\thmnote{#3}\enspace(continued)}
\theoremstyle{exmp_contd}
\definecolor{ExQ}{HTML}{0000FF}
\definecolor{Dec}{HTML}{E07B00}
\DeclareMathOperator{\Spec}{Spec}
\DeclareMathOperator{\ASpec}{ASpec}
\DeclareMathOperator{\Proj}{Proj}
\newcommand{\AC}{\mathbf{A}}
\newcommand{\KC}{\mathbf{K}}
\newcommand{\CC}{\mathbf{C}}
\newcommand{\N}{\mathbb{N}}
\newcommand{\Z}{\mathbb{Z}}
\newcommand{\Q}{\mathbb{Q}}
\newcommand{\V}{\mathbb{V}}
\newcommand{\C}{\mathbb{C}}
\newcommand{\pia}{\pi_{\mathrm{atom}}}
\newcommand{\Pro}{\mathbb{P}}
\newcommand{\pid}{\mathfrak{p}}
\newcommand{\maxid}{\mathfrak{m}}
\newcommand{\qid}{\mathfrak{q}}
\DeclareMathOperator{\Hom}{\mathrm{Hom}}
\DeclareMathOperator{\kernel}{\mathrm{ker}}
\DeclareMathOperator{\Supp}{\mathrm{Supp}}
\DeclareMathOperator{\ASupp}{\mathrm{ASupp}}
\DeclareMathOperator{\Sym}{\mathrm{Sym}}
\newcommand{\Pot}{\mathcal{P}}
\newcommand{\modl}{\text{-}\mathrm{mod}}
\tikzset{round left paren/.style={ncbar=0.5cm,out=120,in=-120}}
\tikzset{round right paren/.style={ncbar=0.5cm,out=60,in=-60}}
\newcolumntype{C}[1]{>{\centering\arraybackslash$}p{#1}<{$}}
\newlength{\mycolwd}
\definecolor{lightgray}{gray}{0.8}
\newcolumntype{L}{>{\raggedleft}p{0.28\textwidth}}
\newcolumntype{R}{p{0.8\textwidth}}
\definecolor{ctcolor}{gray}{0.95}
\definecolor{ctucolor}{gray}{0.85}
\newcommand{\thickhline}{%
    \noalign {\ifnum 0=`}\fi \hrule height 1pt
    \futurelet \reserved@a \@xhline
}
\newcolumntype{"}{@{\hskip\tabcolsep\vrule width 1pt\hskip\tabcolsep}}
\newlist{theoremenumerate}{enumerate}{1}
\setlist[theoremenumerate]{label=(\arabic{theoremenumeratei}), ref=\thetheorem.(\arabic{theoremenumeratei}),noitemsep}
\author{Sebastian Posur}
\thanks{The author is supported by Deutsche Forschungsgemeinschaft (DFG) grant SFB-TRR 195: \emph{Symbolic Tools in Mathematics and their Application}}
\address{Department of mathematics, University of Siegen, 57068 Siegen, Germany}
\email{\href{mailto:Sebastian Posur <sebastian.posur@uni-siegen.de>}{sebastian.posur@uni-siegen.de}}
\begin{document}

\title[Atom spectra and sheafification in toric geometry]{Atom spectra of graded rings and sheafification in toric geometry}

\begin{abstract}
We prove that the atom spectrum, which is a topological space 
associated to an arbitrary abelian category introduced by Kanda,
of the category of finitely presented graded modules over a graded ring $R$
is given as a union of the homogeneous spectrum of $R$ with some additional points, which we call non-standard points.
This description of the atom spectrum helps in understanding
the sheafification process
in toric geometry: if $S$ is the Cox ring of a normal toric variety $X$ without torus factors,
then a finitely presented graded $S$-module sheafifies to zero
if and only if its atom support consists only of points
in the atom spectrum of $S$
which either lie in the vanishing locus
of the irrelevant ideal of $X$ or are non-standard.
\end{abstract}

\keywords{%
Atom spectrum, Cox ring, sheafification%
}
\subjclass[2010]{%
Primary 14M25, 
Secondary 18E10
}
\maketitle

\tableofcontents

\section{Introduction}
The sheafification of a finitely presented graded module $M$ over the Cox ring $S$ of 
a normal toric variety $X$ over $\C$ without torus factors
yields a coherent sheaf $\widetilde{M}$ on $X$.
In the case of the $n$-dimensional projective space 
$\Pro^n = \Proj(\C[x_0, \dots, x_n])$, a finitely presented $\Z$-graded $S$-module $M$ sheafifies to zero if and only if
the support of $M$ lies in the vanishing locus of the irrelevant ideal $\langle x_0, \dots, x_n \rangle$.
However, this geometric criterion to check $\widetilde{M} \simeq 0$ does not naively generalize from $\Pro^n$ to $X$:
for example, it is impossible to read off from the support of
a graded module $M$ over the Cox ring of the weighted projective space $\Pro(1,1,2)$
whether $M$ sheafifies to zero or not \cite[Example 5.3.11]{CLS11}.
Cox states in \cite{Cox95} that
``there are many nonzero modules which give the zero sheaf''
and analyzes this ``phenomenon''
by giving an algebraic criterion for the 
special case where
$X$ is a simplicial toric variety
in \cite[Proposition 3.5]{Cox95}.

In this paper, we remedy the failure of the geometric criterion
for an arbitrary $X$
by applying the very general theory of atom spectra.
To an arbitrary abelian category $\AC$,
Kanda \cite{Kanda12} defines a topological space $\ASpec( \AC )$
called the atom spectrum of $\AC$
whose points consist of atoms, i.e., 
equivalence classes of so-called monoform objects.
For an object $A \in \AC$, its atom support $\ASupp(A) \subseteq \ASpec( \AC )$
consists of all the atoms defined by monoform subquotients of $A$.
A main theorem on atom spectra states that
if all objects in $\AC$ are noetherian,
then an \emph{open} subclass $U \subseteq \ASpec(\AC)$ 
defines a Serre subcategory
\[
 \{ A \in \AC \mid \ASupp(A) \subseteq U \},
\]
and all Serre subcategories arise in this way from a uniquely determined 
corresponding $U$.
Specializing to the case $\AC = S\modl$, the category of finitely presented graded $S$-modules,
it is natural to ask how the open subclass
corresponding to the Serre subcategory
\[
 \{ M \in S\modl \mid \widetilde{M} \simeq 0 \}
\]
looks like. 

To answer this question,
we first investigate the atom spectrum
of the category $R\modl$ of finitely presented $G$-graded $R$-modules for a $G$-graded ring $R$ in Section \ref{section:graded_atom},
where $G$ denotes an abelian group (see the beginning of that section
for our definition of a $G$-graded ring).
In Theorem \ref{theorem:main}, we describe the structure of $\ASpec(R) := \ASpec(R\modl)$
relative to the classical homogeneous spectrum of $R$:
any homogeneous prime ideal $\pid \subseteq R$ and any element $g \in G$
gives rise to a point $\widetilde{\pid}(g)$ in $\ASpec(R)$,
and two such points $\widetilde{\pid}(g)$ and $\widetilde{\qid}(h)$
are equal if and only if $\pid = \qid$ and
$h + G_{\pid} = g + G_{\pid}$, where $G_{\pid}$ is a subgroup
of $G$ determined by $\pid$ (see Definition \ref{definition:subgroup}).
We conclude in Remark \ref{remark:disjoint_union}
that we have a disjoint union on a set-theoretic level:
\[
 \ASpec(R) \simeq \big\{ \widetilde{\pid}(0) \mid \pid \subseteq R\text{~hom. prime} \big\} \uplus \big\{ \widetilde{\pid}(g) \mid g \not\in G_{\pid}, ~\pid \subseteq R\text{~hom. prime} \big\},
\]
where we call all points in that second subset \textbf{non-standard points}.

In Section \ref{section:applications}, we apply our findings
to the graded Cox ring $S$ of $X$ and remedy our criterion
for checking whether $\widetilde{M}\simeq 0$ in Theorem \ref{theorem:main3}:

\begin{theoremnonumber}
 Let $X$ be a normal toric variety over $\C$ without torus factors and with $G$-graded
 Cox ring $S$ and irrelevant ideal $B(\Sigma)$.
 A finitely presented $G$-graded $S$-module $M$ sheafifies to zero
 if and only if
 \[
  \ASupp( M ) \subseteq \big\{ \widetilde{\pid}(0) \mid \pid \in \Supp(S/B(\Sigma)) \big\} \uplus \big\{ \text{non-standard points in $\ASpec(S)$}\big\}.
 \]
\end{theoremnonumber}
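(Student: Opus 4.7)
The approach is to apply the Kanda correspondence recalled in the introduction to the subcategory
\[
 \mathcal{T} := \{M \in S\modl \mid \widetilde{M} \simeq 0\},
\]
which is Serre because sheafification is exact. Let $U \subseteq \ASpec(S)$ be the unique open subclass with $\mathcal{T} = \{M : \ASupp(M) \subseteq U\}$. Since Theorem \ref{theorem:main} classifies every atom as some $\widetilde{\pid}(g)$, the claim reduces to deciding membership in $U$ case by case, using the criterion $U = \bigcup_{M \in \mathcal{T}} \ASupp(M)$, the monoform representative $(S/\pid)(-g)$ of $\widetilde{\pid}(g)$ from Section \ref{section:graded_atom}, and the chart-wise sheafification formula expressing $\widetilde{M}|_{U_\sigma}$ as the degree-$0$ part of the localization of $M$ at $x^{\hat\sigma} := \prod_{\rho \notin \sigma(1)} x_\rho$.

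The forward inclusion: for a standard atom $\widetilde{\pid}(0)$ with $\pid \supseteq B(\Sigma)$, every generator of $B(\Sigma)$ annihilates $S/\pid$, so the localization (and hence the sheafification) vanishes on every chart, giving $S/\pid \in \mathcal{T}$ and $\widetilde{\pid}(0) \in U$. For a non-standard atom $\widetilde{\pid}(g)$ with $g \notin G_\pid$, all non-zero graded pieces of $(S/\pid)(-g)$ lie in the coset $g + G_\pid$; since $\deg(x^{\hat\sigma}) \in G_\pid$ whenever $x^{\hat\sigma} \notin \pid$, localization preserves this coset, and $0 \notin g + G_\pid$ forces the degree-$0$ part to be empty on every chart. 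Thus $(S/\pid)(-g) \in \mathcal{T}$ and $\widetilde{\pid}(g) \in U$.

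The reverse inclusion requires showing that no standard atom $\widetilde{\pid}(0)$ with $\pid \not\supseteq B(\Sigma)$ lies in $U$. By the criterion on $U$, it suffices to verify that every $A \in \mathcal{T}$ satisfies $\widetilde{\pid}(0) \notin \ASupp(A)$. If $\widetilde{\pid}(0) \in \ASupp(A)$, then some monoform subquotient of $A$ has atom $\widetilde{\pid}(0)$ and, by monoform equivalence, shares a non-zero subobject $X$ with $S/\pid$; exactness of sheafification reduces $\widetilde{A} \neq 0$ to $\widetilde{X} \neq 0$. One thus reduces to showing that for $\pid \not\supseteq B(\Sigma)$ every non-zero homogeneous submodule $X \subseteq S/\pid$ satisfies $\widetilde{X} \neq 0$: choosing $\sigma$ with some $x_\rho \in B(\Sigma) \setminus \pid$ for $\rho \notin \sigma(1)$, and using that $S/\pid$ is a graded domain, a suitable homogeneous element of $X$ multiplied by a product of such $x_\rho$'s produces a degree-$0$ element of $X_{x^{\hat\sigma}}$ that is non-zero since $\pid$ is prime and the relevant factors are outside $\pid$.

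The main obstacle will be completing this last step: ensuring that for every non-zero homogeneous submodule $X$ of $S/\pid$ one can actually produce a homogeneous element whose degree is attainable in the monoid generated by $\{\deg(x_\rho) : \rho \notin \sigma(1)\}$ for some cone $\sigma$ with $x^{\hat\sigma} \notin \pid$. This combinatorial compatibility between the class-group grading and the cone structure of the fan is the technical heart of the theorem for non-simplicial toric varieties; once established, both inclusions follow from the chart-by-chart vanishing analysis outlined above.
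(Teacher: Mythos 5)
Your overall architecture is sound and close in spirit to the paper's (identify the open class $U$ corresponding to the kernel of sheafification, classify atoms via Theorem \ref{theorem:main}, and decide membership chart by chart), and your forward inclusion is correct: $S/\pid$ sheafifies to zero when $\pid \supseteq B(\Sigma)$, and the coset argument for non-standard atoms $\widetilde{\pid}(g)$ is exactly right, since $\deg(x^{\widehat{\sigma}}) \in G_{\pid}$ whenever $x^{\widehat{\sigma}} \notin \pid$. Your reduction of the reverse inclusion to the claim that, for $\pid \not\supseteq B(\Sigma)$, every non-zero graded submodule $X \subseteq S/\pid$ has $\widetilde{X} \neq 0$ is also legitimate (atom-equivalence gives a common subobject, and exactness of sheafification passes vanishing to subquotients).

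However, the proposal stops precisely at the theorem's technical content: you defer the existence of a chart on which $(X_{x^{\widehat{\sigma}}})_0 \neq 0$ as ``the main obstacle,'' and the way you set it up --- pick \emph{some} $\sigma$ with $x^{\widehat{\sigma}} \notin \pid$ and multiply by variables $x_\rho$, $\rho \notin \sigma$ --- does not work for an arbitrary such chart. The degree of a homogeneous element of $X$ lies in the monoid generated by $\{\deg(x_i) \mid x_i \notin \pid\}$, which may involve variables with $i \in \sigma$; in general one cannot cancel those contributions against $\N_0 \cdot \deg(x^{\widehat{\sigma}})$ (e.g.\ with $G = \Z^2$, $\deg(x_0)=(1,0)$, $\deg(x_1)=(0,1)$, $\pid = 0$ and the chart inverting only $x_0$, the submodule $\langle x_1 \rangle$ has vanishing degree-$0$ localization). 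The missing idea, and the point where the paper's Lemma \ref{lemma:technical_lemma} does the work, is that $\Sigma$ is closed under taking subsets: since some $x^{\widehat{\sigma}} \notin \pid$, the set $\tau := \{ i \mid x_i \in \pid \}$ satisfies $\tau \subseteq \sigma$, hence $\tau \in \Sigma$, and for \emph{this} chart one has $\deg(S/\pid) + \langle \deg(x^{\widehat{\tau}}) \rangle_{\Z} = G_{\pid}$. Concretely, if $\bar{m} \in X$ is non-zero homogeneous with $\deg(\bar{m}) = \sum_{i \in \widehat{\tau}} a_i \deg(x_i)$, set $k := \max_i a_i$ and multiply by $\prod_{i \in \widehat{\tau}} x_i^{\,k - a_i}$; the result is non-zero (as $S/\pid$ is a domain and all factors avoid $\pid$) and dividing by $(x^{\widehat{\tau}})^k$ yields a non-zero degree-$0$ element of $X_{x^{\widehat{\tau}}}$. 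No further combinatorics of the fan or the class-group grading is needed, and the paper in fact carries out this step purely at the level of atoms $\widetilde{\pid}(g)$ (via Theorem \ref{theorem:general_desc} and Theorem \ref{theorem:kernel_decomposition}) rather than for arbitrary submodules. As written, your proposal has a genuine gap at exactly this step.
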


In conclusion, we 
can think of the non-standard points
as the missing piece
of the homogeneous spectrum
for understanding when a graded module sheafifies
to zero by looking at its support.

 \begin{convention}
  Whenever we have an object equipped with a grading by an abelian group $G$, e.g., 
  a graded ring $R$ or a graded $R$-module $M$,
  we will simplify terminology as follows:
  \begin{enumerate}
   \item An element $r \in R$ will always mean a \emph{homogeneous} element.
         We write $\deg(r) \in G$ for its degree provided $r \neq 0$. We will do the same for elements $m \in M$.
   \item A prime ideal $\pid \subseteq R$ will always mean a \emph{homogeneous} prime ideal.
         We denote by $\Spec(R)$ the set of \emph{homogeneous} prime ideals of $R$.
   \item An $R$-module will always mean a \emph{graded} $R$-module,
         and likewise for submodules.
         By $R\modl$ we denote the category of finitely presented $G$-graded $R$-modules.
         We write $\Supp(M) \subseteq \Spec(R)$ for the \emph{homogeneous} primes in the support of $M$.
  \end{enumerate}
  Whenever we want to address the underlying non-graded ring of $R$ or non-graded module of $M$,
  we will write $|R|$ and $|M|$, and the adapted notions of element, prime ideal, spectrum, module,
  and support retain their classical meaning, e.g., by $\Spec(|R|)$, we address the
  set of not-necessarily homogeneous prime ideals of $R$.
  
  In this paper, we will make use of subsets of $G$ that constitute of the
  degrees of the homogeneous non-zero elements of a $G$-graded module $M$.
  We set
  \[
   \deg( M ) := \{ \deg(m) \mid m \in M\setminus \{0\} \text{~homogeneous} \} \subseteq G.
  \]
  
  If $M$ is a $G$-graded module,
  its \textbf{shift} by $g \in G$
  is the $G$-graded module $M(g)$ with
  graded parts
  \[
   M(g)_h := M_{g + h}
  \]
  for $h \in G$.
 \end{convention}

\section{Preliminaries: atom spectra and the classification of Serre subcategories}
We give a short introduction to Kanda's classification
of Serre subcategories of an abelian category consisting of noetherian objects \cite{Kanda12}.

\begin{definition}
 Let $\AC$ be an abelian category.
 A \emph{non-zero} object $A \in \AC$ is called \textbf{monoform}
 if the following holds:
 whenever we are given
 \begin{enumerate}
  \item a chain of subobjects $0 \lneq B' \leq B \leq A$,
  \item a subobject $C \leq A$,
  \item and an isomorphism $C \simeq B/B'$,
 \end{enumerate}
 then we already have $C \simeq 0$.
\end{definition}

Simple objects in $\AC$ are examples for monoform objects.
The free abelian group in one generator is an example for a non-simple monoform object
in the category of abelian groups.

\begin{remark}
 Non-zero subobjects of monoform objects are themselves monoform.
\end{remark}

We call two monoform objects $A,B \in \AC$
\textbf{atom-equivalent}
if they admit a common non-zero subobject.
Atom-equivalence defines an equivalence relation 
on the class of monoform objects in $\AC$ \cite[Proposition 2.8]{Kanda12}.
We denote the equivalence class of a monoform object $A \in \AC$ by $\overline{A}$
and call such a class an $\textbf{atom}$ of $\AC$.

\begin{definition}
 Let $\AC$ be an abelian category.
 \begin{enumerate}
  \item The \textbf{atom support} of an object $A \in \AC$
        is defined as
        \[
         \ASupp( A ) := \{ \overline{B} \mid B \text{~is a monoform subquotient of~} A\}
        \]
  \item The \textbf{atom spectrum} of $\AC$ is a topological space\footnote{Beware that its underlying class of points might not be a set.}
        whose points are given by
        \[
         \ASpec( \AC ) := \{ \overline{B} \mid B \text{~is a monoform object in~} \AC \}
        \]
        and with the classes $\ASupp(A)$, $A \in \AC$ as a basis of the topology.\footnote{ 
        This characterization of the opens in $\ASpec( \AC )$ is given in \cite[Proposition 3.9]{Kanda12}.
        }
 \end{enumerate}
\end{definition}

Recall that
an object $A \in \AC$ is called \textbf{noetherian}
if every ascending chain of its subobjects eventually stabilizes.
Now, we can turn to
the geometric picture
for the classification of Serre subcategories.

\begin{theorem}\label{theorem:Kandas_main_theorem}
 Let $\AC$ be an abelian category in which every object is noetherian.
 Then we get a bijection
 \begin{center}
         \begin{tikzpicture}[label/.style={postaction={
          decorate,
          decoration={markings, mark=at position .5 with \node #1;}},
          mylabel/.style={thick, draw=none, align=center, minimum width=0.5cm, minimum height=0.5cm,fill=white}}]
          \coordinate (r) at (2.5,0);
          \coordinate (d) at (0,-2);
          \node (A) {$\{ \text{Serre subcategories $\CC \subseteq \AC$}\}$};
          \node (B) at ($(A)+(d)$) {$\{ \text{open subclasses $U \subseteq \ASpec( \AC )$} \}$};
          
          \draw[bend right,->,thick,label={[left]{$\ASupp$}},out=360-25,in=360-155] (A) to (B);
          \draw[bend right,->,thick,label={[right]{$\ASupp^{-1}$}},out=360-25,in=360-155] (B) to (A);
          \draw[draw=none] (A) --node[rotate=90]{$\simeq$} (B);
          \end{tikzpicture}
        \end{center}
  where we set
  \[
   \ASupp( \CC ) := \bigcup_{C \in \CC} \ASupp(C)
  \]
  for a Serre subcategory $\CC \subseteq \AC$, and
  \[
   \ASupp^{-1}(U) := \{ A \in \AC \mid \ASupp( A ) \subseteq U \}
  \]
  for an open subclass $U \subseteq \ASpec( \AC )$.
  Under this bijection, the intersection of finitely many Serre subcategories
  corresponds to the intersection of open subclasses.
\end{theorem}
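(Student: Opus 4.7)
The plan is to check that the two assignments are well-defined, verify that they are mutually inverse, and note the intersection compatibility as a triviality. That $\ASupp(\CC) = \bigcup_{C \in \CC} \ASupp(C)$ is open is immediate from the definition of the topology, since each $\ASupp(C)$ is a basic open. Showing $\ASupp^{-1}(U) \subseteq \AC$ is a Serre subcategory reduces, after trivial verifications for subobjects and quotients, to the following key lemma, which I would prove first:
\[
\ASupp(A) \;=\; \ASupp(A') \cup \ASupp(A'') \qquad \text{for every short exact sequence } 0 \to A' \to A \to A'' \to 0.
\]
The inclusion $\supseteq$ is clear. For $\subseteq$, given a monoform subquotient $B/B'$ of $A$, I would insert the intermediate subobject $B' + (B \cap A')$ between $B'$ and $B$, obtaining two successive factors: one a subquotient of $A'$ and one of $A''$. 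Viewed as subobjects of the monoform $B/B'$, each is either zero or atom-equivalent to $B/B'$, and at least one is non-zero, realizing $\overline{B/B'}$ in $\ASupp(A')$ or $\ASupp(A'')$.

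For the round-trip $\ASupp(\ASupp^{-1}(U)) = U$: the inclusion into $U$ is by definition, and conversely any $\overline{M} \in U$ lies in some basic open $\ASupp(A) \subseteq U$, whose $A$ certifies $\overline{M} \in \ASupp(\ASupp^{-1}(U))$. For $\ASupp^{-1}(\ASupp(\CC)) = \CC$, one inclusion is immediate. For the other, take $A$ with $\ASupp(A) \subseteq \ASupp(\CC)$ and, using the noetherian hypothesis together with extension-closure of $\CC$, let $L \leq A$ be the largest subobject lying in $\CC$. If $L \neq A$, then $A/L$ is a non-zero noetherian object and so contains a monoform subobject $M$ by Kanda's foundational lemma. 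Then $\overline{M} \in \ASupp(A) \subseteq \ASupp(\CC)$ produces a $C \in \CC$ together with a monoform subquotient of $C$ atom-equivalent to $M$, hence a common non-zero subobject $L_0$; by the Serre property of $\CC$ applied to subquotients of $C$, we have $L_0 \in \CC$. Pulling back $L_0 \leq M \leq A/L$ to $A$ yields a subobject $L' \supsetneq L$ with $L'/L \cong L_0 \in \CC$, so $L' \in \CC$ by extension-closure, contradicting the maximality of $L$. Hence $A = L \in \CC$.

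The intersection statement reduces to the trivial equivalence $\ASupp(A) \subseteq U_1 \cap U_2$ if and only if $\ASupp(A) \subseteq U_1$ and $\ASupp(A) \subseteq U_2$. The main obstacle in the whole argument is twofold: first, the preliminary fact that every non-zero noetherian object admits a monoform subobject, which requires a genuinely non-trivial noetherian induction on subobjects witnessing non-monoform behavior; second, the short-exact-sequence additivity of $\ASupp$, where choosing the right intermediate subobject and leveraging that non-zero subobjects of monoforms are atom-equivalent to the ambient monoform is the decisive step.
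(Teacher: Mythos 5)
Your argument is correct in substance, but it takes a different route from the paper: the paper does not reprove this result at all, it simply invokes Kanda's classification theorem (\cite[Theorem 4.3]{Kanda12}, read for classes instead of sets) and only adds the one-line observation that $\ASupp^{-1}$, and hence $\ASupp$, is compatible with finite intersections. What you do instead is reconstruct Kanda's proof: the additivity $\ASupp(A)=\ASupp(A')\cup\ASupp(A'')$ along short exact sequences (Kanda's Proposition 3.3, which the paper cites separately elsewhere), the existence of a monoform subobject in any non-zero noetherian object (which you still quote rather than prove), and the maximal-subobject/contradiction argument for $\ASupp^{-1}(\ASupp(\CC))\subseteq\CC$, which is exactly where the noetherian hypothesis enters. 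This buys self-containedness at the cost of length, while the paper's citation keeps the section short since the theorem is only used as a tool. One small imprecision in your extension step: of the two successive factors obtained from inserting $B'+(B\cap A')$ between $B'$ and $B$, only the first is a subobject of the monoform $B/B'$; the second is a quotient, and non-zero quotients of monoform objects need not be atom-equivalent to them. The correct case split is: if the first factor is non-zero, it is a non-zero subobject of $B/B'$, hence atom-equivalent to it and a subquotient of $A'$; if it is zero, then $B'+(B\cap A')=B'$ and the second factor is $B/B'$ itself, exhibited as a subquotient of $A''$. Your argument is easily repaired to this form, and with that emendation (plus the set-versus-class caveat the paper flags) the proposal is a valid, more self-contained alternative to the paper's citation-based proof.
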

\begin{proof}
 The statement about the bijection is \cite[Theorem 4.3]{Kanda12} for classes instead of sets.
 Moreover, it is easy to see that $\ASupp^{-1}$ is compatible
 with intersections, and thus, the same is true for $\ASupp$.
\end{proof}

\begin{remark}\label{remark:geometric_criterion}
We get a geometric picture for 
the question whether a given object $A \in \AC$
lies in a Serre subcategory $\CC \subseteq \AC$:
\begin{align*}
 A \in \CC \hspace{1em}&\Longleftrightarrow\hspace{1em} A \in \ASupp^{-1}( \ASupp( \CC ) ) \\
 \hspace{1em}&\Longleftrightarrow\hspace{1em} \ASupp( A ) \subseteq \ASupp( \CC ).
\end{align*} 
\end{remark}

\begin{example}{\cite[Proposition 7.2, Remark 7.4]{Kanda12}}\label{example:commutative_noetherian}
 If $R$ is a commutative noetherian ring, 
 and $R\modl$ the category of finitely presented $R$-modules,
 then
 \[
  \Spec( R ) \stackrel{\sim}{\longrightarrow} \ASpec( R\modl ): \pid \mapsto \overline{R/\pid}
 \]
 defines a bijection that actually is a homeomorphism of topological spaces
 if we equip $\Spec( R )$ with the Hochster dual of the Zariski topology,
 i.e., the open sets in $\Spec( R )$ are exactly the specialization closed
 subsets.
 Furthermore, via this bijection,
 the atom support of a module $M \in R\modl$
 corresponds to its support. Thus, if $\CC \subseteq R\modl$
 denotes a Serre subcategory,
 then
 \[
  M \in \CC \hspace{1em}\Longleftrightarrow\hspace{1em} \Supp( M ) \subseteq \bigcup_{C \in \CC} \Supp( C ),
 \]
 a criterion which goes back to Gabriel \cite{Gab_thesis}.
\end{example}

\section{The atom spectrum of a graded ring}\label{section:graded_atom}

 Let $G$ be an (additively written) abelian group.
 By a \textbf{$G$-graded ring} $R$, we mean that
 \begin{enumerate}
  \item $R$ is a commutative ring,
  \item $R$ is equipped with a $G$-grading, i.e., a decomposition into abelian groups
 \[
  R = \bigoplus_{g \in G} R_g
 \]
 such that $R_g \cdot R_h \subseteq R_{g + h}$ for all $g,h \in G$,
 \item the subring $R_0$ is noetherian,
 \item there exist finitely many homogeneous elements $x_0, \dots, x_n \in R$
       for an $n \in \N_0$ such that $R = R_0[ x_0, \dots, x_n ]$.
 \end{enumerate}
Note that the underlying ring $|R|$ is noetherian since it is an algebra
of finite type over a noetherian ring.

We are going to describe 
\[
\ASpec( R ) := \ASpec( R\modl ). 
\]
We start by exhibiting a special class of monoform objects in $R\modl$.

\begin{lemma}\label{lemma:monoform_class}
 Let $\pid \in \Spec(R)$ and $g \in G$.
 Then $R/\pid(g)$ is a monoform object in $R\modl$.
\end{lemma}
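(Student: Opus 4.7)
The plan is to reduce to the unshifted case $g = 0$: the shift functor $M \mapsto M(g)$ is an auto-equivalence of $R\modl$ (with inverse $M \mapsto M(-g)$) and hence preserves the property of being monoform, so it suffices to prove that $R/\pid$ itself is monoform. Suppose we are given a chain $0 \lneq B' \leq B \leq R/\pid$ together with a subobject $C \leq R/\pid$ and an isomorphism $\phi \colon B/B' \xrightarrow{\sim} C$. Using the correspondence between graded submodules of $R/\pid$ and homogeneous ideals of $R$ containing $\pid$, I will write $B = \aid/\pid$, $B' = \aid'/\pid$ with $\aid' \supsetneq \pid$, and $C = \mathfrak{b}/\pid$ with $\mathfrak{b} \supseteq \pid$; the goal becomes to show $\mathfrak{b} = \pid$.

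The heart of the argument is a standard annihilator transfer. Pick any non-zero homogeneous $\overline{y} \in C$, i.e., $y \in \mathfrak{b} \setminus \pid$. Since $\pid$ is prime, the homogeneous annihilator of $\overline{y}$ in $R$ equals $(\pid : y) = \pid$. Pulling $\overline{y}$ back through the graded isomorphism $\phi$ produces a non-zero homogeneous $\overline{a} \in B/B'$, represented by some $a \in \aid \setminus \aid'$, whose $R$-annihilator must coincide with that of $\overline{y}$; in ideal terms, $(\aid' : a) = \pid$.

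The contradiction now exploits the hypothesis $B' \neq 0$: since $\aid' \supsetneq \pid$, I can pick an element $c \in \aid' \setminus \pid$. Because $\aid'$ is an ideal and $c \in \aid'$, $a \in R$, we have $c \cdot a \in \aid'$, which means $c \in (\aid' : a) = \pid$, contradicting $c \notin \pid$. Therefore $\mathfrak{b} = \pid$ and $C = 0$, proving that $R/\pid$ is monoform.

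There is no real obstacle: the argument is essentially the graded version of the classical fact that, for a prime ideal $\pid$, the module $R/\pid$ admits no non-zero submodule isomorphic to a proper subquotient of itself. The only point requiring care is compatibility with the $G$-grading, but since $\phi$ is a morphism of graded modules and $y$ is chosen homogeneous, the pulled-back element $a$ is automatically homogeneous and all ideals appearing in the annihilator computation are homogeneous, so the argument passes through verbatim in the graded setting.
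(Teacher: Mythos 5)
Your proof is correct, but it takes a genuinely different route from the paper. The paper's proof is a reduction to the ungraded case: the forgetful functor $R\modl \rightarrow |R|\modl$ is faithful and exact, hence reflects monoform objects, and $|R|/\pid$ is monoform over the noetherian ring $|R|$ by Kanda's Proposition 7.1; so $R/\pid(g)$ is monoform. You instead argue directly inside the graded category (after the harmless reduction to $g=0$ via the shift auto-equivalence): every non-zero element of a subobject $C \leq R/\pid$ has annihilator exactly $\pid$ because $\pid$ is prime, while every non-zero element of a subquotient $B/B' \simeq \aid/\aid'$ with $B' \neq 0$ has annihilator $(\aid' : a) \supseteq \aid' \supsetneq \pid$, so no isomorphism $C \simeq B/B'$ can exist unless $C = 0$. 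This is the classical annihilator-comparison argument, and it has the merit of being self-contained and of not using noetherianity of $|R|$ or the cited non-graded result; the paper's argument is shorter and illustrates the useful general principle that faithful exact functors reflect monoformness, which fits its overall strategy of transporting Kanda's ungraded results to the graded setting. Two cosmetic points: you should state explicitly that you argue by contradiction assuming $C \neq 0$ (your choice of a non-zero $\overline{y} \in C$ presupposes this), and note that homogeneity of the chosen elements is not actually needed for the annihilator computation, as you essentially observe at the end.
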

\begin{proof}
  The functor that forgets the $G$-grading
  \[
   R\modl \rightarrow |R|\modl
  \]
  is faithful and exact, thus, it reflects monoform objects,
  i.e., $M \in R\modl$ is monoform if $|M|$ is.
  It follows that $R/\pid(g)$ is a monoform object
  since its underlying non-graded module (over the noetherian ring $|R|$) is monoform \cite[Proposition 7.1]{Kanda12}.
\end{proof}

Following the notation introduced in \cite[Section 6]{Kanda12}
and generalizing it to the graded case,
we write
\[
 \widetilde{\pid}(g) := \overline{R/\pid(g)} \in \ASpec(R)
\]
in order to refer to a point in the atom spectrum
associated to a prime $\pid \subseteq R$ and an element $g \in G$.
If $g$ is the neutral element, we simply write $\widetilde{\pid}$.

\begin{remark}\label{remark:opposed_to_non_graded}
 Opposed to the non-graded commutative case described in \cite[Proposition 7.1]{Kanda12}, 
 we can have ideals $\mathfrak{a} \subseteq R$ which
 are not prime, but such that $R/\mathfrak{a}$ is monoform.
 For example, consider the $\Z$-graded algebra $R = \Q[t]$ with $\deg(t) = 1$ and $\mathfrak{a} = \langle t^2 \rangle$.
 For $R/\mathfrak{a}$, the only non-trivial subquotient which is not also a subobject is given by $R/\langle t \rangle$.
 But there does not exist a submodule of $R/\mathfrak{a}$ which is isomorphic to $R/\langle t \rangle$.
 Nevertheless, since the submodule of $R/\mathfrak{a}$ which is generated by the class of $t$
 is isomorphic to $R/\langle t \rangle(-1)$,
 we have
 \[
  \overline{R/\mathfrak{a}} = \widetilde{\langle t \rangle}(-1)
 \]
 in $\ASpec(R)$.
\end{remark}

Next, we want to generalize the last observation of Remark \ref{remark:opposed_to_non_graded}
and see that the class of monoform
objects described in Lemma \ref{lemma:monoform_class} suffices to describe all atoms.
For this, we need the following well-known structure theorem
of finitely presented graded modules.

\begin{theorem}\label{theorem:structure_graded_module}
  Let $M \in R\modl$.
  Then there exist an $r \in \N_0$ and a filtration of $M$ by submodules
  \[
    0 = M^0 \leq M^1 \leq \dots \leq M^r = M
  \]
  such that $M^i/M^{i-1} \simeq R/\pid_i(g_i)$
  for $\pid_i \in \Spec(R)$ and $g_i \in G$, $i = 1, \dots, r$.
\end{theorem}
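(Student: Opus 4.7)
The plan is to adapt the classical proof of the existence of a prime filtration for a finitely generated module over a commutative noetherian ring to the graded setting, carefully tracking degrees when cyclic submodules are produced. I would rely on two preliminary facts: first, that $|R|$ is noetherian, so $M \in R\modl$ is noetherian in the graded sense (any ascending chain of graded submodules is in particular an ascending chain of submodules of $|M|$, hence stabilizes); second, that the annihilator of a homogeneous element in a graded module is automatically a homogeneous ideal (any $r \in R$ annihilating a homogeneous $m$ does so componentwise in its decomposition into homogeneous parts).

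The core step is to produce, for any non-zero $M \in R\modl$, a graded submodule isomorphic to a shift $R/\pid(g)$ of some $\pid \in \Spec(R)$. Let
\[
 \Sigma := \{ \mathrm{Ann}(m) \mid m \in M \setminus \{0\} \text{ homogeneous}\}
\]
be the collection of homogeneous annihilator ideals of non-zero homogeneous elements of $M$. By noetherianness of $|R|$, $\Sigma$ has a maximal element $\pid = \mathrm{Ann}(m_0)$. I would then verify that $\pid$ is a (homogeneous) prime by the standard argument: if $r,s$ are homogeneous with $rs \in \pid$ and $r \notin \pid$, then $m' := rm_0$ is a non-zero homogeneous element whose annihilator contains $\pid$, so by maximality of $\pid$ in $\Sigma$ we have $\mathrm{Ann}(m') = \pid$; the relation $sm' = srm_0 = 0$ then forces $s \in \pid$. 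Setting $g := -\deg(m_0)$, the map $R(g) \to M$, $1 \mapsto m_0$ is a degree-$0$ morphism (since $1 \in R_0 = R(g)_{-g}$ and $m_0 \in M_{\deg(m_0)} = M_{-g}$) and factors through an embedding $R/\pid(g) \hookrightarrow M$.

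With this building block in hand, I would set $M^1 \leq M$ equal to the image of such an embedding, then apply the same construction to $M/M^1$ to obtain $M^2$ with $M^2/M^1 \simeq R/\pid_2(g_2)$, and iterate. The resulting strictly ascending chain $0 = M^0 \lneq M^1 \lneq M^2 \lneq \cdots$ must terminate at some $M^r = M$ by noetherianness of $M$, yielding the desired filtration. The only step that is not mechanically inherited from the ungraded proof is the verification that the maximal homogeneous annihilator is prime in the homogeneous sense; the remainder is routine degree bookkeeping.
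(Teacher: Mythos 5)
Your proof is correct and is essentially the same argument the paper relies on: the paper simply cites the $\Z$-graded prime-filtration proof of \cite[Proposition I.7.4]{Har} and notes that $\Z$ may be replaced by $G$, and your write-up is exactly that classical argument (maximal homogeneous annihilator is prime, giving an embedded shifted $R/\pid(g)$, then noetherian induction) carried out in the $G$-graded setting.
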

\begin{proof}
  The (non-constructive) proof in the $\Z$-graded case given in \cite[Proposition I.74]{Har}
  can be used by simply replacing $\Z$ with $G$.
\end{proof}
 
\begin{corollary}\label{corollary:monoform_reps}
  Let $M \in R\modl$ be monoform.
  Then there exist $\pid \in \Spec(R)$ and $g \in G$ such that
  \[
    \overline{M} = \widetilde{\pid}( g ).
  \]
\end{corollary}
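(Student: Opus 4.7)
The plan is to apply the structure theorem Theorem \ref{theorem:structure_graded_module} to the monoform module $M$ and to extract an explicit monoform subobject of the desired form.

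First I would note that $M$ is non-zero since monoform objects are non-zero by definition. Invoking Theorem \ref{theorem:structure_graded_module}, I get a filtration
\[
 0 = M^0 \lneq M^1 \leq \dots \leq M^r = M
\]
with $r \geq 1$ and with subquotients $M^i/M^{i-1} \simeq R/\pid_i(g_i)$ for certain $\pid_i \in \Spec(R)$ and $g_i \in G$. I would then focus entirely on the bottom step: the submodule $M^1 \leq M$ equals $M^1/M^0$, hence
\[
 M^1 \simeq R/\pid_1(g_1),
\]
so in particular $M^1$ is a non-zero subobject of $M$.

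The next step is to identify $M^1$ as a common non-zero subobject of the two monoform modules $M$ and $R/\pid_1(g_1)$. On the one hand, $M^1 \leq M$ is tautologically a subobject. On the other hand, via the isomorphism $M^1 \simeq R/\pid_1(g_1)$, the module $M^1$ also embeds as a subobject (indeed, as all) of $R/\pid_1(g_1)$, which itself is monoform by Lemma \ref{lemma:monoform_class}. Therefore $M$ and $R/\pid_1(g_1)$ are atom-equivalent, giving
\[
 \overline{M} \;=\; \overline{R/\pid_1(g_1)} \;=\; \widetilde{\pid_1}(g_1),
\]
which is the claim.

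I do not anticipate a significant obstacle here, since all the technical work has been done in Theorem \ref{theorem:structure_graded_module} and Lemma \ref{lemma:monoform_class}. The only point that deserves a moment of care is making sure that the filtration is strict at the first step, i.e., that one genuinely obtains a \emph{non-zero} subobject $M^1$, which is immediate from $M \neq 0$ and the convention that the filtration exhausts $M$.
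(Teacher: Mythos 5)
Your proposal is correct and follows essentially the same route as the paper: apply Theorem \ref{theorem:structure_graded_module} to get a submodule of $M$ of the form $R/\pid(g)$, which is monoform by Lemma \ref{lemma:monoform_class} and serves as a common non-zero subobject establishing atom-equivalence. The paper's proof is just a terser version of this argument, so your extra care about the bottom filtration step being non-zero is a fine (if brief) elaboration rather than a deviation.
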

\begin{proof}
  Theorem \ref{theorem:structure_graded_module}
  tells us that $M$ has a submodule of the form $R/\pid( g )$,
  which completes the proof.
\end{proof}

Thus, every atom of $R\modl$
is represented by a module of the form $R/\pid(g)$.
Next, we analyze when two such modules
give rise to the same atom.

\begin{definition}\label{definition:subgroup}
 Let $\pid \subseteq R$ be a prime.
 We set 
 \[
  G_{\pid} := \langle \deg(R/\pid) \rangle_{\Z} \leq G.
 \]
\end{definition}

\begin{lemma}\label{lemma:monoform_uniqueness}
 Given primes $\pid, \qid \subseteq R$ and elements $g,h \in G$.
 Then
 \[
  \widetilde{\pid}(g) = \widetilde{\qid}(h)
 \]
 if and only if 
 \[
 \pid = \qid \hspace{1.5em}\text{and}\hspace{1.5em} g + G_{\pid} = h + G_{\pid}.
 \]
\end{lemma}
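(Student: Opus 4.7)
The plan is to unwind the definition of atom-equivalence directly: $\widetilde{\pid}(g) = \widetilde{\qid}(h)$ holds iff there exists a non-zero $N \in R\modl$ admitting monomorphisms into both $R/\pid(g)$ and $R/\qid(h)$. Both directions will be argued by studying such a common non-zero subobject, exploiting that $R/\pid$ is an integral domain because $\pid$ is a genuine prime of the commutative ring $R$.

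For the forward direction, suppose $N$ is a common non-zero submodule. I first identify $\mathrm{Ann}_R(N)$. Since $\pid$ kills $R/\pid(g)$, we have $\pid \subseteq \mathrm{Ann}_R(N)$; conversely, picking a non-zero homogeneous $n \in N$ and noting that its image in $R/\pid(g)$ corresponds to a non-zero $\bar{x} \in R/\pid$, the domain property of $R/\pid$ forces any annihilator $r$ to lie in $\pid$. The symmetric argument with $\qid$ gives $\pid = \mathrm{Ann}_R(N) = \qid$. For the degree statement, choose a non-zero homogeneous $n \in N$ of some degree $d$; the two embeddings send $n$ to non-zero homogeneous elements of degree $d$ in $R/\pid(g)$ and $R/\pid(h)$, which by the shift convention $M(g)_d = M_{g+d}$ correspond to non-zero elements of $(R/\pid)_{d+g}$ and $(R/\pid)_{d+h}$. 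Hence both $d+g$ and $d+h$ lie in $\deg(R/\pid)$, and their difference $g - h$ lies in $G_{\pid}$.

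For the backward direction, assume $\pid = \qid$ and $g - h \in G_{\pid}$. Atom-equivalence is a transitive relation on monoform objects, and $G_{\pid}$ is generated as a $\Z$-module by $\deg(R/\pid)$, so it suffices to show that $R/\pid(g)$ and $R/\pid(g-e)$ are atom-equivalent for each single $e \in \deg(R/\pid)$, and then to iterate and use symmetry. Given such $e$, fix a non-zero homogeneous $\bar{r} \in (R/\pid)_e$. Multiplication by $\bar{r}$ is an $R$-linear map $R/\pid \to R/\pid$ of degree $e$, injective by the domain property, which after twisting by $g$ becomes a monomorphism $R/\pid(g-e) \hookrightarrow R/\pid(g)$. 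Together with the identity $R/\pid(g-e) \to R/\pid(g-e)$, this exhibits $R/\pid(g-e)$ itself as a common non-zero submodule.

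The only potentially delicate point is the annihilator computation in the forward direction: one must consistently interpret the graded objects through their ungraded counterparts and invoke that $\pid$ is prime in $R$ itself. This is bookkeeping rather than a genuine obstacle; beyond that, the proof reduces to elementary degree manipulations and the domain property of $R/\pid$.
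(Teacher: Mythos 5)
Your proof is correct and follows essentially the same route as the paper: the annihilator argument on a common non-zero (cyclic) subobject gives $\pid=\qid$ and the degree comparison gives $g-h\in G_{\pid}$, while the converse uses the cyclic submodule generated by a non-zero homogeneous element of $R/\pid$ (your multiplication-by-$\bar r$ map is exactly the paper's $\langle \overline{x}\rangle_R \simeq R/\pid(-\deg x)$) together with transitivity of atom-equivalence. The only cosmetic difference is that the paper first reduces to $h=0$ via the twisting auto-equivalence, whereas you track both shifts directly.
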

\begin{proof}
 Since twisting with an element $g \in G$ defines an auto-equivalence of $R\modl$,
 it suffices to prove
 \[
  \widetilde{\pid}(g) = \widetilde{\qid} \hspace{1em}\Longleftrightarrow\hspace{1em} \pid = \qid \text{\hspace{0.5em}and\hspace{0.5em}} g \in G_{\pid}.
 \]
 Assume $\widetilde{\pid}(g) = \widetilde{\qid}$.
 Then $R/\pid( g )$ and $R/\qid$ have a common non-zero cyclic submodule
 whose annihilator is both $\pid$ and $\qid$. Thus, $\pid = \qid$.
 So, we are left to show
 \[
  \widetilde{\pid}(g) = \widetilde{\pid} \hspace{1em}\Longleftrightarrow\hspace{1em} g \in G_{\pid}.
 \]
 Let $x \in R \setminus \pid$.
 Then
 \[
  R/\pid(-\deg(x)) \simeq \langle \overline{x} \rangle_R \leq R/\pid,
 \]
 which implies
 \[
  \widetilde{\pid}( -\deg(x) ) = \widetilde{\pid}.
 \]
 By applying twists and using the transitivity of the atom-equivalence,
 we obtain
 \[
  \widetilde{\pid}(g) = \widetilde{\pid}
 \]
 for any $\Z$-linear combination $g$ of elements of the form $\deg(x)$ for $x \in R \setminus \pid$,
 in other words,
 for all $g \in G_{\pid}$.
 
 Conversely, if $\widetilde{\pid}(g) = \widetilde{\pid}$
 for some $g \in G$,
 then $R/\pid( g )$ and $R/\pid$ have a common non-zero cyclic subobject
 which is necessarily
 of the form $R/\pid(h)$ for some $h \in G$.
 Since $R/\pid(h) \hookrightarrow R/\pid$, we have $h \in G_{\pid}$.
 Since $R/\pid(h) \hookrightarrow R/\pid(g)$, we have $h-g \in G_{\pid}$
 and thus $g \in G_{\pid}$.
\end{proof}

\begin{lemma}\label{lemma:characterization_Gp}
 Let $\pid \subseteq R$ be a prime, and $I := \{ i \mid x_i \in \pid \}$.
 Then
 \[
  G_{\pid} = \langle \deg( x_i ) \mid i \not\in I \rangle_{\Z}.
 \]
 In particular, $G_{\pid}$ only depends on the set $I$.
\end{lemma}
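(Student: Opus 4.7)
The plan is to prove the two inclusions of the claimed equality separately; the ``in particular'' statement is then immediate, since the right-hand side is visibly a function of $I$ alone. Recall that by definition $G_{\pid} = \langle \deg(R/\pid) \rangle_\Z$ is the subgroup of $G$ generated by degrees of homogeneous elements of $R \setminus \pid$, so both inclusions amount to comparing which degrees occur outside of $\pid$.

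The inclusion $\langle \deg(x_i) \mid i \not\in I \rangle_\Z \subseteq G_{\pid}$ is immediate from the definition of $I$: for $i \not\in I$, the generator $x_i$ lies in $R \setminus \pid$, so $\deg(x_i)$ is one of the degrees defining $G_{\pid}$. For the reverse inclusion it suffices to show that $\deg(r) \in \langle \deg(x_i) \mid i \not\in I \rangle_\Z$ for every homogeneous $r \in R \setminus \pid$. Using the hypothesis $R = R_0[x_0, \dots, x_n]$, I would expand $r$ as a finite sum
\[
 r = \sum_{\alpha \in \N_0^{n+1}} c_\alpha \, x^\alpha, \qquad c_\alpha \in R_0, \quad x^\alpha := \prod_{i=0}^n x_i^{\alpha_i},
\]
and, using that $R_0$ sits in degree $0$ while $r$ is homogeneous of degree $g := \deg(r)$, restrict the sum to those $\alpha$ with $\sum_i \alpha_i \deg(x_i) = g$. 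Since $\pid$ is closed under sums and $r \not\in \pid$, some single term $c_\alpha x^\alpha$ must fail to lie in $\pid$; primeness of $\pid$ then forces $x^\alpha \not\in \pid$, which means $x_i \not\in \pid$ for every $i$ with $\alpha_i > 0$, i.e., $\alpha_i = 0$ whenever $i \in I$. Consequently
\[
 g \;=\; \deg(x^\alpha) \;=\; \sum_{i \not\in I} \alpha_i \deg(x_i) \;\in\; \langle \deg(x_i) \mid i \not\in I \rangle_\Z,
\]
which is the desired inclusion.

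The argument has no real obstacle; the only subtlety is to exploit the \emph{primeness} of $\pid$ at the level of a single monomial $x^\alpha$, which is what converts the condition ``$r \not\in \pid$'' (an additive statement about sums of monomials) into a statement about individual factors $x_i$. This is exactly the place where the assumptions that $\pid$ is a homogeneous prime and that $R$ is generated over $R_0$ by the $x_i$ are both used.
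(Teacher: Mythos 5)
Your proof is correct and takes essentially the same route as the paper: expand a homogeneous $r \in R\setminus\pid$ as an $R_0$-linear combination of monomials in $x_0,\dots,x_n$ and note that any monomial surviving modulo $\pid$ involves only the variables $x_i$ with $i \notin I$, so $\deg(r) \in \langle \deg(x_i) \mid i \notin I\rangle_{\Z}$ (the paper phrases this as representing elements of $R/\pid$ by monomials in the variables outside $\pid$ and leaves the easy reverse inclusion implicit). One small remark: the steps you attribute to primeness, namely $c_\alpha x^\alpha \notin \pid \Rightarrow x^\alpha \notin \pid$ and $x^\alpha \notin \pid \Rightarrow x_i \notin \pid$ for all $i$ with $\alpha_i > 0$, in fact only use that $\pid$ is an ideal, so your argument (like the paper's) works verbatim for any homogeneous ideal.
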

\begin{proof}
 An element $r \in R/{\pid}$
 can be represented by an $R_0$-linear combination of monomials
 that only involve elements in $\{x_0,\dots,x_n\} \setminus \pid$.
 In particular, $\deg(r) \in \langle \deg(x) \mid x \in \{x_0,\dots,x_n\} \setminus \pid \rangle_{\Z}$.
\end{proof}

Now, we can fully understand the points in $\ASpec( R )$
and how they are related to the points in $\Spec(R)$.

\begin{theoremanddefinition}\label{theorem:main}
 Let $R$ be a $G$-graded ring. Then
 \[
  \ASpec(R) = \big\{ \widetilde{\pid}(g) \mid \pid \subseteq R \text{~prime}, ~g \in G \big\}.
 \]
 Moreover, the map
 \[
  \pia: \ASpec( R ) \rightarrow \Spec(R): \widetilde{\pid}(g) \mapsto \pid,
 \]
 which we call the \textbf{atom projection}, is well-defined and has the following properties:
 \begin{enumerate}
  \item For every $\pid \in \Spec(R)$, we have a bijection
  \[
   \pia^{-1}\big( \{\pid\} \big) \stackrel{\sim}{\longrightarrow} G/G_{\pid}: \widetilde{\pid}(g) \mapsto g + G_{\pid}.
  \]
  Moreover, the set of factor groups that occur as such fibers, i.e.,
 \[
  \big\{ G/G_{\pid} \mid \pid \subseteq R \text{~a prime} \big\},
 \]
  is finite.
  \item Given $M \in R\modl$, then
  \[
   \pia( \ASupp(M) ) = \Supp(M).
  \]
  \item Continuity in the following sense:
  given $M \in R\modl$, then 
  \[
  \pia^{-1}( \Supp(M) ) = \big\{ \widetilde{\pid}(g) \mid \pid \in \Supp(M), g \in G \big\}
  \]
   is an open subset of $\ASpec(R)$.
 \end{enumerate}
\end{theoremanddefinition}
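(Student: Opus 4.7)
The plan is to prove the five assertions one after the other, relying on the three lemmas that immediately precede the statement as the main workhorses.

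First, for the assertion $\ASpec(R) = \{ \widetilde{\pid}(g) \mid \pid \subseteq R \text{ prime}, g \in G\}$ and the well-definedness of $\pia$: the first is exactly Corollary \ref{corollary:monoform_reps}, and Lemma \ref{lemma:monoform_uniqueness} immediately shows that $\widetilde{\pid}(g) = \widetilde{\qid}(h)$ forces $\pid = \qid$, so $\pia$ is well-defined. For part (1), the bijection $\pia^{-1}(\{\pid\}) \stackrel{\sim}{\to} G/G_{\pid}$ is again a direct consequence of Lemma \ref{lemma:monoform_uniqueness}: it exactly says that the map $g \mapsto \widetilde{\pid}(g)$ factors through $G/G_{\pid}$ and becomes injective there, and surjectivity is the already-established description of atoms. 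Finiteness of $\{G/G_{\pid}\}$ follows from Lemma \ref{lemma:characterization_Gp}: $G_{\pid}$ depends only on the set $I = \{i \mid x_i \in \pid\} \subseteq \{0, \ldots, n\}$, and there are only $2^{n+1}$ such subsets.

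For part (2), I would establish both inclusions using the structure theorem (Theorem \ref{theorem:structure_graded_module}). For $\pia(\ASupp(M)) \subseteq \Supp(M)$, take a monoform subquotient $N$ of $M$ with $\overline{N} = \widetilde{\pid}(g)$. Applying the structure theorem to $N$ produces a submodule of the form $R/\pid'(h) \hookrightarrow N$; atom-equivalence together with Lemma \ref{lemma:monoform_uniqueness} forces $\pid' = \pid$. Since support is monotone under passage to subquotients, $V(\pid) = \Supp(R/\pid(h)) \subseteq \Supp(N) \subseteq \Supp(M)$, so $\pid \in \Supp(M)$. For the reverse inclusion, given $\pid \in \Supp(M)$, apply the structure theorem to $M$ itself to obtain filtration quotients $R/\pid_i(g_i)$. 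Since $\Supp(M) = \bigcup_i V(\pid_i)$, we have $\pid \supseteq \pid_i$ for some $i$, so $R/\pid(g_i)$ is a quotient of the subquotient $R/\pid_i(g_i)$ of $M$; hence $\widetilde{\pid}(g_i) \in \ASupp(M)$ and $\pia(\widetilde{\pid}(g_i)) = \pid$.

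For part (3), the plan is simply to exhibit, around every point of $\pia^{-1}(\Supp(M))$, a basic open neighborhood contained in $\pia^{-1}(\Supp(M))$. Given $\widetilde{\pid}(g) \in \pia^{-1}(\Supp(M))$, set $A := R/\pid(g)$, which lies in $R\modl$ and is monoform by Lemma \ref{lemma:monoform_class}. Then $\widetilde{\pid}(g) = \overline{A} \in \ASupp(A)$, and by part (2) applied to $A$ we have $\pia(\ASupp(A)) = \Supp(A) = V(\pid) \subseteq \Supp(M)$, so $\ASupp(A) \subseteq \pia^{-1}(\Supp(M))$. Since the sets $\ASupp(A)$ form a basis of the topology on $\ASpec(R)$, this shows $\pia^{-1}(\Supp(M))$ is open.

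The only step that requires a moment of thought is the $\supseteq$ inclusion in part (2), because the structure theorem only provides filtration quotients indexed by certain primes $\pid_i$, while the prime $\pid \in \Supp(M)$ could be strictly larger than all of them; the crucial observation is that a quotient $R/\pid(g_i)$ of $R/\pid_i(g_i)$ remains a subquotient of $M$ and is still monoform, which closes the argument.
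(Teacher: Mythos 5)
Your proposal is correct and follows essentially the same route as the paper: it assembles Lemma \ref{lemma:monoform_class}, Corollary \ref{corollary:monoform_reps}, Lemma \ref{lemma:monoform_uniqueness}, and Lemma \ref{lemma:characterization_Gp} for the description of $\ASpec(R)$ and part (1), uses the structure Theorem \ref{theorem:structure_graded_module} for part (2), and proves part (3) by exhibiting the basic open neighborhood $\ASupp(R/\pid(g))$ inside $\pia^{-1}(\Supp(M))$, exactly as in the paper. The only (harmless) difference is that in part (2) you verify both inclusions directly via the structure theorem, where the paper reduces to the cyclic case $R/\pid(g)$ using the compatibility of atom supports with extensions from \cite[Proposition 3.3]{Kanda12}.
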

\begin{proof}
 This theorem summarizes Lemma \ref{lemma:monoform_class}, Corollary \ref{corollary:monoform_reps}, and Lemma \ref{lemma:monoform_uniqueness}.
 Furthermore, from Lemma \ref{lemma:characterization_Gp}, 
 it follows that the number of different $G/G_{\pid}$ is bounded by
 the cardinality of the powerset of $\{x_0,\dots,x_n\}$.
 Thus, we only need to prove part $2$ and $3$ of the assertion.
 
 Part $2$ is easy for modules of the form $R/\pid(g)$ for primes $\pid \subseteq R$ 
 and elements $g \in G$.
 The general case
 follows from the structure Theorem \ref{theorem:structure_graded_module},
 and the compatibility of
 union of atom supports
 with extensions of objects \cite[Proposition 3.3]{Kanda12}.
 
 To prove continuity, 
 let $M \in R\modl$ and $\widetilde{\pid}(g) \in \pia^{-1}(\Supp(M))$. Then 
 \[
 \Supp(R/\pid(g)) \subseteq \Supp(M) 
 \]
  and thus
  \[
   \ASupp( R/\pid(g) ) \subseteq \pia^{-1}( \Supp( R/\pid(g) ) ) \subseteq \pia^{-1}(\Supp(M)).
  \]
  The claim follows since $\ASupp( R/\pid(g) )$ is an open neighborhood of $\widetilde{\pid}(g)$.\qedhere
\end{proof}

 The atom projection allows us to think about the atom spectrum of $R$
 as an enhancement of the homogeneous spectrum.
 To foster this intuition, we introduce the following notions.
 \begin{definition}\label{definition:standard_points}
  Let $R$ be a $G$-graded ring.
   We call an atom which is of the form $\widetilde{\pid}$ for a prime $\pid \subseteq R$
   a \textbf{standard point} in $\ASpec(R)$. All the other atoms, i.e., points of the form $\widetilde{\pid}(g)$ for $g \not\in G_{\pid}$,
   are called \textbf{non-standard points}.
   Furthermore, we set 
   \[
    \mathcal{U}_R := \big\{ \text{non-standard points in $\ASpec(R)$} \big\}.
   \]
 \end{definition}

 \begin{remark}\label{remark:disjoint_union}
  Theorem \ref{theorem:main} allows us to think of
  $\ASpec(R)$ as a disjoint union:
  \[
   \ASpec(R) \simeq \Spec(R) \uplus \mathcal{U}_R,
  \]
  where we identify $\Spec(R)$ with the set of all standard points.
 \end{remark}

\begin{lemma}
 Let $R$ be a $G$-graded ring. Then $\mathcal{U}_R \subseteq \ASpec(R)$ is open.
\end{lemma}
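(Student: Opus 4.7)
The plan is to exhibit $\mathcal{U}_R$ directly as a union of basic opens. Concretely, I will establish the equality
\[
 \mathcal{U}_R = \bigcup_{\widetilde{\pid}(g) \in \mathcal{U}_R} \ASupp\bigl( R/\pid(g) \bigr),
\]
which yields openness immediately, since each $\ASupp(R/\pid(g))$ belongs to the defining basis of the topology on $\ASpec(R)$. The inclusion $\supseteq$ is trivial: $\widetilde{\pid}(g) = \overline{R/\pid(g)} \in \ASupp(R/\pid(g))$ by Lemma \ref{lemma:monoform_class}. All the content sits in $\subseteq$, which reduces to the assertion: \emph{if $g \notin G_\pid$, then $\ASupp(R/\pid(g)) \subseteq \mathcal{U}_R$.}

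To prove this assertion, fix $g \notin G_\pid$ and a point $\widetilde{\qid}(h) \in \ASupp(R/\pid(g))$. Part~(2) of Theorem \ref{theorem:main} gives $\qid \in \Supp(R/\pid) = V(\pid)$, so $\pid \subseteq \qid$ and therefore $G_\qid \subseteq G_\pid$ (via the obvious inclusion $\deg(R/\qid) \subseteq \deg(R/\pid)$). Next, combining Corollary \ref{corollary:monoform_reps} with the argument used in the proof of Lemma \ref{lemma:monoform_uniqueness}, one finds a representative $h' \in h + G_\qid$ of the atom $\widetilde{\qid}(h)$ such that $R/\qid(h')$ appears as a genuine subquotient of $R/\pid(g)$.

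The key step is then a single degree count. Any subquotient of $R/\pid(g)$ has degree support contained in $\deg(R/\pid(g)) = \deg(R/\pid) - g$, so evaluating on the class of $1 \in R/\qid(h')$, which sits in degree $-h'$, yields $-h' \in \deg(R/\pid) - g$, i.e., $g - h' \in \deg(R/\pid) \subseteq G_\pid$. Combined with $h - h' \in G_\qid \subseteq G_\pid$, this forces $h \equiv g \pmod{G_\pid}$; since $g \notin G_\pid$ we conclude $h \notin G_\pid$, a fortiori $h \notin G_\qid$, so $\widetilde{\qid}(h)$ is non-standard, as required. The only mild obstacle is keeping the bookkeeping between the two shifts $h, h'$ and the chain of subgroups $G_\qid \subseteq G_\pid$ straight; once one commits to the representative $R/\qid(h')$ that is really realized as a subquotient of $R/\pid(g)$, the degree arithmetic is a one-liner.
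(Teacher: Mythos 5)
Your proof is correct and follows essentially the same route as the paper's: reduce to showing $\ASupp(R/\pid(g)) \subseteq \mathcal{U}_R$ for a non-standard point $\widetilde{\pid}(g)$, realize any atom of that support by an actual subquotient of the form $R/\qid(h')$, and conclude from the degree inclusion $\deg(R/\qid(h')) \subseteq \deg(R/\pid(g))$ together with $G_{\qid} \subseteq G_{\pid}$ that the point is non-standard. The only cosmetic slip is that the inclusion you call ``trivial'' is the $\subseteq$ direction of your displayed equality (every point lies in its own atom support), while the substantive direction is $\supseteq$; this does not affect the argument.
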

\begin{proof}
 Let $\widetilde{\pid}(g)$ be a non-standard point.
 It suffices to show that $\ASupp( R/\pid(g) ) \subseteq \mathcal{U}_R$,
 so, let $M$ be a monoform subquotient of $R/\pid(g)$.
 By Theorem \ref{theorem:structure_graded_module},
 $M$ contains a monoform submodule
 of the form $R/\qid(h)$ for
 $\qid \subseteq R$ a prime and $h \in G$, and $\overline{M} = \widetilde{\qid}(h)$.
 From
 \[
 \deg( R/\qid(h) ) \subseteq \deg( R/\pid(g) ),
 \]
 we conclude
 \[
  h \in g + G_{\pid} \neq G_{\pid}
 \]
 and thus 
 \[
 h \not\in G_{\pid} \supseteq G_{\qid}.
 \]
 It follows that $\widetilde{\qid}(h)$ is also a non-standard point.
\end{proof}

\begin{example}
Let $k$ be a field, and let
$R = k[ x_0, \dots, x_n ]$ be the 
$\Z$-graded polynomial ring with
all $x_i$ of degree $1$.
Set $\maxid := \langle x_0, \dots, x_n \rangle \in \Spec(R)$.
Then
\[
 \pia^{-1}(\{ \maxid \}) \simeq \Z
\]
and
\[
 \mathcal{U}_R = \big\{ \widetilde{\maxid}(i) \mid i \in \Z\setminus\{0\} \big\}.
\]
\end{example}

\begin{example}\label{example:aspec_p112}
Let $k$ be a field, and let
$R = k[ x_0, x_1, x_2 ]$ be the 
$\Z$-graded polynomial ring with
 $\deg(x_0) = \deg(x_1) = 1$ and $\deg(x_2) = 2$.
 Set
$\maxid := \langle x_0, x_1, x_2 \rangle$
and
$\pid := \langle x_0, x_1 \rangle$. Then
\[
 \pia^{-1}(\{ \maxid \}) \simeq \Z \hspace{1em}\text{and}\hspace{1em} \pia^{-1}(\{ \pid \}) \simeq \Z/2\Z
\]
and
\[
 \mathcal{U}_R = \big\{ \widetilde{\maxid}(i) \mid i \in \Z\setminus\{0\} \big\} \cup \big\{ \widetilde{\pid}(1) \big\}.
\]
\end{example}

\section{Applications to sheafification in toric geometry}\label{section:applications}

We are interested in the kernel of the sheafification process studied in toric geometry.
We give a short introduction to this subject
based on \cite[Chapter 5]{CLS11}, and then show how the atom
spectrum of a graded ring sheds new light on it.

For a normal toric variety $X$ over $\C$ without torus factors there exist
the following data:
\begin{enumerate}
 \item A natural number $n \in \N_0$ and an $n+1$-dimensional $\C$-vector space
       $W = \C^{n+1}$, whose standard basis elements are denoted by $x_0, \dots, x_n$.
 \item An algebraic subgroup $H \subseteq (\C^ {\ast})^{n+1}$ acting on $W$ by componentwise multiplication.
 \item A non-empty subset $\Sigma$ of the powerset of $\{0, \dots, n\}$ that is closed under taking subsets.
\end{enumerate}
An element $\sigma \in \Sigma$ gives rise to a monomial 
\[
 x^{\widehat{\sigma}} := \prod_{i \not\in \sigma} x_{i}
\]
in the so-called \textbf{Cox ring} of $X$, which is the symmetric algebra
\[
 S := \Sym( W ) = \C[ x_0, \dots, x_n ].
\]
The action of $H$ on $W$ induces an action on $S$,
and since $H$ is reductive, we get a decomposition
\[
 S = \bigoplus_{g \in G} S_g,
\]
where $G := \Hom_{\text{alg.Grps}}( H, \C^{\ast} )$ is the group of algebraic characters of $H$.
This turns $S$ into a $G$-graded ring with homogeneous generators $x_0, \dots, x_n$.

\begin{remark}
 The action of $H$ on $S$ gives the following geometric interpretation 
 of the group $G/G_{\pid}$ (see Definition \ref{definition:subgroup})
 for a prime $\pid \in \Spec(S)$.
 Set 
 \[
  I := \{ i \mid x_i \in \pid \} \hspace{2em}\text{and}\hspace{2em}\widehat{I} := \{ i \mid x_i \not\in \pid \}.
 \]
 We identify $(\C^{\ast})^I$ with the subgroup
 \[
  \big\{ (x_0, \dots, x_n) \mid \forall i \in \widehat{I}: x_i = 1 \big\} \subseteq (\C^{\ast})^{n+1}.
 \]
 Then, for any point
 \[
  p \in \big\{ (x_0, \dots, x_n) \in \C^{n+1} \mid x_i = 0 ~\Leftrightarrow~ i \in I\big\},
 \]
 we can compute the stabilizer subgroup of $H$ with respect to $p$ as
 \[
   \mathrm{Stab}_H(p) = \mathrm{Stab}_{(\C^{\ast})^{n+1}}(p) \cap H =\C^I \cap H.
 \]
 From the duality of the following
 pullback and pushout diagrams
 \begin{center}
  \begin{tabular}{ccc}

      \begin{tikzpicture}[label/.style={postaction={
        decorate,
        decoration={markings, mark=at position .5 with \node #1;}},
        mylabel/.style={thick, draw=none, align=center, minimum width=0.5cm, minimum height=0.5cm,fill=white}},
        baseline = ($0.5*(A) + 0.5*(C)$)]
        \coordinate (r) at (2.5,0);
        \coordinate (d) at (0,-2);
        \node (A) {$(\C^{\ast})^I$};
        \node (B) at ($(A) + (r)$) {$(\C^{\ast})^{n+1}$};
        \node (C) at ($(A) + (d)$) {$\mathrm{Stab}_H(p)$};
        \node (D) at ($(B) + (d)$) {$H$};
        
        \draw[right hook->,thick] (A) -- (B);
        \draw[right hook->,thick] (C) -- (A);
        \draw[right hook->,thick] (C) -- (D);
        \draw[right hook->,thick] (D) -- (B);
      \end{tikzpicture}
          
      & 
      \begin{tikzpicture}[label/.style={postaction={
        decorate,
        decoration={markings, mark=at position .5 with \node #1;}},
        mylabel/.style={thick, draw=none, align=center, minimum width=0.5cm, minimum height=0.5cm,fill=white}},
        baseline = (A)]
        \coordinate (r) at (2.5,0);
        \coordinate (d) at (0,-2);
        \node (A) {};
        \node (B) at ($(A) + (r)$) {};
        
        \draw[bend right,->,thick,label={[below]{$\Hom_{\text{alg.Grps}}( -, \C^{\ast} )$}},out=360-25,in=360-155] (A) to (B);
          \draw[bend right,->,thick,label={[above]{$\Hom_{\Z}( -, \C^{\ast} )$}},out=360-25,in=360-155] (B) to (A);
      \end{tikzpicture}

      & 
          
      \begin{tikzpicture}[label/.style={postaction={
        decorate,
        decoration={markings, mark=at position .5 with \node #1;}},
        mylabel/.style={thick, draw=none, align=center, minimum width=0.5cm, minimum height=0.5cm,fill=white}},
        baseline = ($0.5*(A) + 0.5*(C)$)]
        \coordinate (r) at (2.5,0);
        \coordinate (d) at (0,-2);
        \node (A) {$\Z^{n+1}/ \Z^{\widehat{I}}$};
        \node (B) at ($(A) + (r)$) {$\Z^{n+1}$};
        \node (C) at ($(A) + (d)$) {$G/G_{\pid}$};
        \node (D) at ($(B) + (d)$) {$G$,};
        
        \draw[->>,thick] (B) -- (A);
        \draw[->>,thick] (A) -- (C);
        \draw[->>,thick] (D) -- (C);
        \draw[->>,thick] (B) -- (D);
      \end{tikzpicture}
          
  \end{tabular}
 \end{center}
 we conclude that $G/G_{\pid}$ is the group of algebraic characters of $\mathrm{Stab}_H(p)$:
 \[
  G/G_{\pid} \simeq \Hom_{\text{alg.Grps}}\big( \mathrm{Stab}_H(p), \C^{\ast} \big).
 \] 
 As we have seen in Lemma \ref{lemma:characterization_Gp},
 $G/G_{\pid}$ only depends on $I$, not on $\pid$ itself.
\end{remark}

Next, we define the so-called \textbf{irrelevant ideal}
\[
 B( \Sigma ) := \langle x^{\widehat{\sigma}} \mid \sigma \in \Sigma \rangle_S
\]
of the Cox ring. 
Now, $X$ can be reconstructed as a variety as a quotient\footnote{In \cite[Chapter 5]{CLS11}, it is called an almost geometric quotient.}
\[
 X \simeq \left(\C^{n+1} \setminus \V\big( |B( \Sigma )| \big)\right)/\!\!/H,
\]
where $\V( |B( \Sigma )| )$ denotes the vanishing set of the irrelevant ideal (regarded without its grading),
which in particular implies that we have a bijection
\[
 \big\{ \text{ $\C$-points of $X$ } \big\} \simeq \big\{ \text{ closed orbits of the $H$-action on $\C^{n+1} \setminus \V\big( |B( \Sigma )| \big)$ } \big\}.
\]
\begin{example}\label{example:projective_space}
 For the projective space $\Pro^n$, we have $W = \C^{n+1}$,
 $H = \C^{\ast}$ acting diagonally on $W$,
 and $\Sigma = \Pot( \{ 0, \dots, n \} ) \setminus \{ \{ 0, \dots, n \} \}$,
 where $\Pot$ denotes the powerset.
 Then
 \[
  S = \C[ x_0, \dots, x_n ]
 \]
 is $G = \Hom( \C^{\ast}, \C^{\ast} ) \simeq \Z$ graded with $\deg( x_i ) = 1$ for all $i = 0, \dots n$,
 and
 \[
  B(\Sigma) = \langle x_0, \dots, x_n \rangle.
 \]
 We recover the usual description of $\Pro^n$ as a quotient:
 \[
  \Pro^n \simeq (\C^{n+1}-\{0\})/\!\!/\C^{\ast}.
 \]
\end{example}

Elements $\sigma \in \Sigma$
give rise to a cover of $X$ by affine open subschemes
\[
 \Spec( (S_{x^{\widehat{\sigma}}})_0 ) \subseteq X,
\]
where $(S_{x^{\widehat{\sigma}}})_0$ denotes the homogeneous localization\footnote{ 
The homogeneous localization of a graded ring $R$ at an element $x \in R$
consists of all degree $0$ elements in the localization $R_x$.
}
of $S$ at $x^{\widehat{\sigma}}$.
Now, a finitely presented $G$-graded module $M$ over $S$
induces a coherent sheaf $\widetilde{M}$ over $X$
which is uniquely determined by the property
\[
\widetilde{M}|_{\Spec( (S_{x^{\widehat{\sigma}}} )_0)} \simeq (M_{x^{\widehat{\sigma}}} )_0,
\]
where $(M_{x^{\widehat{\sigma}}} )_0$ denotes the homogeneous localization\footnote{ 
The homogeneous localization of a graded module $M$ over a graded ring $R$ at an element $x \in R$
consists of all degree $0$ elements in the localization $M_x$.
}
of $M$ at $x^{\widehat{\sigma}}$.
It is called the \textbf{sheafification of $M$}.

\begin{example}\label{example:projective_space_cont}
 We continue Example \ref{example:projective_space}.
 Given the $\Z$-graded $S$-module 
 \[
 k \simeq S/\langle x_0, \dots, x_n \rangle_S,
 \]
 we clearly have $\widetilde{k} \simeq 0$ since
 $k_{x_i} \simeq 0$ for all $i = 0, \dots, n$.
 More generally, we can say that a finitely presented $\Z$-graded $S$-module
 $M$ sheafifies to zero if and only if 
 \[
  \Supp( M ) \subseteq \{ \langle x_0, \dots, x_n \rangle \}.
 \]
 Note that it does not matter if we consider the support in $\Spec(S)$ or in $\Spec(|S|)$.
 Geometrically, this criterion seems very plausible since
 it is precisely the origin that is removed from the affine plane $\C^{n+1}$
 in the construction of $\Pro^n$.
 However, we will see in the next example that this criterion does not
 naively generalize to all toric varieties.
\end{example}

\begin{example}\label{example:p112_remedy}
 We analyze the weighted projective space $\Pro(1,1,2)$ \cite[Example 5.3.11]{CLS11}.
 In this case, $W = \C^3$, $H = \C^{\ast} \hookrightarrow (\C^{\ast})^3: h \mapsto (h,h,h^2)$,
 and $\Sigma = \Pot( \{ 0, 1, 2 \} ) \setminus \{ \{ 0, 1, 2 \} \}$.
 Then
 \[
  S = \C[ x_0, x_1, x_2]
 \]
 is $G = \Hom( \C^{\ast}, \C^{\ast} ) \simeq \Z$ graded with $\deg( x_0 ) = \deg( x_1 ) = 1$, $\deg( x_2 ) = 2$
 and
 \[
  B(\Sigma) = \maxid := \langle x_0, x_1, x_2 \rangle.
 \]
 We recover the usual description of $\Pro(1,1,2)$ as a quotient:
 \[
  \Pro(1,1,2) \simeq (\C^{3}-\{0\})/\!\!/\{(h,h,h^2)\mid h \in \C^{\ast}\}.
 \]
 Next, we set $\pid := \langle x_0, x_1 \rangle$ and take a look at the $S$-graded module
 \[
  M := S/\pid.
 \]
 We have $\widetilde{M} \not\simeq 0$
 since $(M_{x_2})_0 \simeq (k[x_2])_0 \simeq k.$
 However, if we shift $M$ by $1$, then we do have $\widetilde{M(1)} \simeq 0$:
 localizing $M$ at $x_0$ or $x_1$ yields the zero module, and
 homogeneous localization at $x_2$ yields:
 \[
  (M_{x_2}(1))_0 \simeq (k[x_2])_1 \simeq 0
 \]
 since $\deg( k[x_2] ) \subseteq 2\Z$.
 
 Since
 \[
  \Supp( M ) = \Supp( M(1) ) = \big\{ \pid, \maxid \big\}
 \]
 we cannot expect the support of a graded module 
 to give a criterion for the sheafification being zero,
 as it was the case in Example \ref{example:projective_space_cont}.
 
 If we take the atom supports instead, then we can understand the full picture.
 The preimages of $\maxid$ and $\pid$ under the atom projection are given by
 \[
  \pia^{-1}( \{\pid\} ) = \big\{ \widetilde{\pid}, \widetilde{\pid}(1) \big\}
 \]
 and
 \[
  \pia^{-1}(\{\maxid\} ) = \big\{ \widetilde{\maxid}(i) \mid i \in \Z \big\}
 \]
 (see Example \ref{example:aspec_p112}).
 Now, the atom supports of $M$ and $M(1)$ differ quite heavily:
 \[
 \ASupp( M ) = \big\{ \widetilde{\pid} \} \cup \{ \widetilde{\maxid}(i) \mid i \in 2\N_0 \big\} \neq 
 \big\{ \widetilde{\pid}(1) \} \cup \{ \widetilde{\maxid}(i) \mid i \in 2\N_0 - 1 \big\} = \ASupp( M(1) )
 \]
 If $\KC$ denotes the Serre subcategory of all those modules in $R\modl$ that sheafify to zero, then 
 the general criterion stated in Remark \ref{remark:geometric_criterion}
 tells us that an $S$-module $N$ sheafifies to zero if and only if
 \[
  \ASupp(N) \subseteq \ASupp( \KC ).
 \]
 Since we clearly have $\pia^{-1}(\{\maxid\}) \subseteq \ASupp( \KC )$,
 the only reason for $\widetilde{M} \not\simeq 0$, but $\widetilde{M(1)} \simeq 0$
 has to be
 \[
   \widetilde{\pid}(1) \in \ASupp( \KC ) \hspace{3em}\text{but}\hspace{3em} \widetilde{\pid} \not\in \ASupp( \KC ).
 \]
 The most apparent difference between these two points is that $\widetilde{\pid}$
 is a standard point while $\widetilde{\pid}(1)$ is non-standard.
\end{example}

The observations of Example \ref{example:p112_remedy}
motivate our main theorem of this section.
Recall that $\mathcal{U}_S$ denotes the set of non-standard points in $\ASpec(S)$.

\begin{theorem}\label{theorem:main3}
 Let $X$ be a normal toric variety over $\C$ without torus factors and with $G$-graded
 Cox ring $S$ and irrelevant ideal $B(\Sigma)$.
 A finitely presented $G$-graded $S$-module $M$ sheafifies to zero
 if and only if
 \[
  \ASupp( M ) \subseteq \big\{ \widetilde{\pid} \mid \pid \in \Supp(S/B(\Sigma)) \big\} \uplus \mathcal{U}_S.
 \]
\end{theorem}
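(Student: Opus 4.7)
The plan is to apply Kanda's classification (Theorem \ref{theorem:Kandas_main_theorem}) to the Serre subcategory
\[
 \KC := \big\{ M \in S\modl \mid \widetilde{M} \simeq 0 \big\}
\]
of $S\modl$; it is Serre because sheafification is exact. Letting $U$ denote the right hand side of the claim, I want to prove $\KC = \ASupp^{-1}(U)$, after which the theorem follows directly from Remark \ref{remark:geometric_criterion}. Openness of $U$ is immediate from the identity
\[
 U = \pia^{-1}\big(\Supp(S/B(\Sigma))\big) \cup \mathcal{U}_S,
\]
which holds because every non-standard atom over a prime in $\Supp(S/B(\Sigma))$ already belongs to $\mathcal{U}_S$; both summands on the right are open, by part (3) of Theorem \ref{theorem:main} and the preceding lemma respectively.

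The main content is the following key lemma, which I would prove first: for every prime $\pid \subseteq S$ and every $g \in G$,
\[
  S/\pid(g) \in \KC \hspace{1.5em}\Longleftrightarrow\hspace{1.5em} \widetilde{\pid}(g) \in U.
\]
Unwrapping the defining property of the sheafification, $\widetilde{S/\pid(g)} \simeq 0$ iff the degree-$g$ piece of the homogeneous localization $(S/\pid)_{x^{\widehat\sigma}}$ vanishes for every $\sigma \in \Sigma$. The condition is automatic when $x^{\widehat\sigma} \in \pid$. When $x^{\widehat\sigma} \notin \pid$, every $x_i$ with $i \notin \sigma$ lies outside $\pid$, and a direct degree analysis identifies the set of nonzero degrees of $(S/\pid)_{x^{\widehat\sigma}}$ with $G_\pid$ (via Lemma \ref{lemma:characterization_Gp} combined with the fact that in the Cox ring of a toric variety without torus factors the character lattice injects into $\Z^{n+1}$). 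Hence $S/\pid(g) \in \KC$ iff either $g \notin G_\pid$, or else $x^{\widehat\sigma} \in \pid$ for all $\sigma \in \Sigma$, the latter being equivalent to $B(\Sigma) \subseteq \pid$; combining with Lemma \ref{lemma:monoform_uniqueness} yields the key lemma.

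Once the key lemma is available, both inclusions of $\KC = \ASupp^{-1}(U)$ fall out of the structure Theorem \ref{theorem:structure_graded_module} together with Serre-closedness. For $\KC \subseteq \ASupp^{-1}(U)$: if $M \in \KC$ and $\widetilde{\pid}(g) \in \ASupp(M)$, a monoform subquotient of $M$ representing this atom contains some $S/\pid(g')$ with $\widetilde{\pid}(g') = \widetilde{\pid}(g)$ as a submodule by Corollary \ref{corollary:monoform_reps}; since $\KC$ is closed under subquotients, $S/\pid(g') \in \KC$, so the key lemma delivers $\widetilde{\pid}(g) \in U$. Conversely, if $\ASupp(M) \subseteq U$, choose a filtration of $M$ with quotients $S/\pid_i(g_i)$ as in Theorem \ref{theorem:structure_graded_module}; each $\widetilde{\pid_i}(g_i)$ lies in $\ASupp(M) \subseteq U$, so each factor is in $\KC$ by the key lemma, and $M \in \KC$ follows inductively from closure of $\KC$ under extensions.

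The main obstacle is the degree-theoretic identity $\deg\big((S/\pid)_{x^{\widehat\sigma}}\big) = G_\pid$ in the case $x^{\widehat\sigma} \notin \pid$; this is the single step that relies on non-formal input beyond the Kanda-theoretic machinery built up in the earlier sections, and it is precisely where the hypothesis that $X$ has no torus factors enters the argument.
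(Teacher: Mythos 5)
Your overall architecture is sound and runs parallel to the paper's: realize the kernel $\KC$ of sheafification as a Serre subcategory, reduce to cyclic modules $S/\pid(g)$ via Theorem \ref{theorem:structure_graded_module}, and conclude with Remark \ref{remark:geometric_criterion} (the paper packages this reduction through Theorem \ref{theorem:general_desc}, Theorem \ref{theorem:kernel_decomposition} and Lemma \ref{lemma:technical_lemma} instead). The gap sits exactly at the step you flag as the crux: the claim that $\deg\big((S/\pid)_{x^{\widehat{\sigma}}}\big) = G_{\pid}$ for \emph{every} $\sigma \in \Sigma$ with $x^{\widehat{\sigma}} \notin \pid$. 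What is actually true is only
\[
 \deg\big((S/\pid)_{x^{\widehat{\sigma}}}\big) \;=\; \langle \deg(x_j) \mid x_j \notin \pid \rangle_{\N_0} \;+\; \langle \deg(x^{\widehat{\sigma}}) \rangle_{\Z} \;\subseteq\; G_{\pid},
\]
and the inclusion is strict in general: the variables outside $\pid$ whose indices lie \emph{inside} $\sigma$ contribute only a monoid, and equality amounts to the image of that monoid in $G/\langle \deg(x_i) \mid i \in \widehat{\sigma} \rangle_{\Z}$ (which is $\Cl(U_\sigma)$) being a subgroup. That is automatic when this quotient is finite, e.g.\ when $X$ is simplicial --- an argument you would still have to spell out, and which the cited fact that the character lattice injects into $\Z^{n+1}$ does not supply --- but it can fail for non-simplicial $X$, which the theorem allows: for the affine cone over the quadric surface ($S = \C[x_1,\dots,x_4]$, $G = \Z$, degrees $1,-1,-1,1$, maximal cone containing all four rays, so $x^{\widehat{\sigma}} = 1$) and $\pid = \langle x_2,x_3,x_4 \rangle$ one has $\deg(S/\pid) = \N_0 \neq \Z = G_{\pid}$.

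Note that the direction of your key lemma which uses this identity needs only the \emph{existence} of some $\sigma' \in \Sigma$ with $x^{\widehat{\sigma'}} \notin \pid$ for which the degree identity holds, and this is precisely where the paper's proof differs in an essential way: it exploits that $\Sigma$ is closed under taking subsets, replacing the given $\sigma$ by $\tau := \{ i \mid x_i \in \pid \} \subseteq \sigma$, which therefore again lies in $\Sigma$; for this $\tau$ the complement $\widehat{\tau}$ is the \emph{full} set of indices with $x_i \notin \pid$, and then $\deg(S/\pid) + \langle \deg(x^{\widehat{\tau}}) \rangle_{\Z} = G_{\pid}$ is immediate (this is the computation inside Lemma \ref{lemma:technical_lemma}). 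Your proposal never invokes this subset-closedness of $\Sigma$, and without it (or a substitute such as the simplicial finiteness argument above, which would not cover the stated generality) the key lemma is unproved; the remaining steps (openness of $U$, the filtration argument, closure of $\KC$ under subquotients and extensions) are fine.
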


We dedicate the subsections of this section
to the proof of Theorem \ref{theorem:main3}.

\begin{notation}
 Since the proof of Theorem \ref{theorem:main3}
 can be given in more general terms (see Theorem \ref{theorem:main2}),
 we let $G$ be an arbitrary abelian group,
 and $R$ be a $G$-graded ring in the sense of Section \ref{section:graded_atom}.
\end{notation}

For the proof of Theorem \ref{theorem:main3},
we proceed as follows. 
By definition, the kernel of the sheafification functor
is given as an intersection of kernels of various homogeneous localization
functors.
In Subsection \ref{subsection:general_description},
we give a general description of the atom support of the kernel of an exact functor
$F: R\modl \rightarrow \AC$ for an arbitrary abelian category $\AC$.
This description is made more concrete in the case
where $F$ is a homogeneous localization functor
in Subsection \ref{subsection:single_localization}.
Finally, in Subsection \ref{subsection:proof_main_theorem},
we investigate intersections of such atom supports
and prove an even more general version
of Theorem \ref{theorem:main3}.

\subsection{A general description of the atom support of the kernel of an exact functor}\label{subsection:general_description}

Given an exact functor
\[
 F: R\modl \rightarrow \AC
\]
into some abelian category $\AC$, its kernel is the Serre subcategory
\[
 \kernel( F ) := \{ M \in R\modl \mid F(M) \simeq 0 \}
\]
of $R\modl$.

The goal of this subsection is to prove the following general description
of the atom support of $\kernel(F)$.

\begin{theorem}\label{theorem:general_desc}
 Given an exact functor $F: R\modl \rightarrow \AC$, then
 \[
  \ASupp( \ker F ) = \big\{ \widetilde{\pid}(g) \in \ASpec(R) \mid F( R/\pid(g) ) \simeq 0 \big\}.
 \]
\end{theorem}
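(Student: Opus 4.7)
The plan is to prove the two set-theoretic inclusions separately. Both directions rest on the interplay between the definition of $\ASupp$ via monoform subquotients and Corollary \ref{corollary:monoform_reps}, which guarantees that every monoform object in $R\modl$ admits a submodule isomorphic to $R/\pid(g)$ for a prime $\pid$ and a shift $g$ realizing its atom class.

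For the inclusion ``$\supseteq$'', I would observe that if $F(R/\pid(g)) \simeq 0$ then $R/\pid(g) \in \ker F$. By Lemma \ref{lemma:monoform_class} the module $R/\pid(g)$ is monoform, so $\widetilde{\pid}(g) = \overline{R/\pid(g)} \in \ASupp(R/\pid(g)) \subseteq \ASupp(\ker F)$.

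For the reverse inclusion, let $\alpha \in \ASupp(\ker F)$. By definition there exist $M \in \ker F$ and a monoform subquotient $N = N_1/N_0$ of $M$ with $\alpha = \overline{N}$. Since $F$ is exact and $F(M) \simeq 0$, the short exact sequence $0 \to N_1 \to M \to M/N_1 \to 0$ forces $F(N_1) \simeq 0$, and then $0 \to N_0 \to N_1 \to N \to 0$ yields $F(N) \simeq 0$. Next I would apply Corollary \ref{corollary:monoform_reps} to the monoform module $N$: tracing through its proof via Theorem \ref{theorem:structure_graded_module}, one obtains a prime $\pid$ and a shift $g$ such that $\alpha = \widetilde{\pid}(g)$ and such that $R/\pid(g)$ is actually realized as a submodule of $N$. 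Exactness of $F$ applied to this embedding yields $F(R/\pid(g)) \hookrightarrow F(N) \simeq 0$, placing $\alpha$ in the right-hand side.

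The only conceptual hurdle is that the right-hand side is not obviously a well-defined subset of $\ASpec(R)$: by Lemma \ref{lemma:monoform_uniqueness} an atom $\widetilde{\pid}(g)$ corresponds to a prime together with a coset $g + G_{\pid}$, while the condition $F(R/\pid(g)) \simeq 0$ constrains the element $g$ itself rather than its coset. I will therefore read the right-hand side existentially, meaning that an atom belongs to it if and only if \emph{some} representative pair $(\pid,g)$ satisfies $F(R/\pid(g)) \simeq 0$, and the argument sketched above meets this criterion on the nose by handing me, through the structure theorem, a concrete representative that embeds into $N$.
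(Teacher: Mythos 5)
Your proof is correct, but it takes a more direct route than the paper. The paper first proves an auxiliary statement (Lemma \ref{lemma:aux}) identifying any \emph{open} $\mathcal{U} \subseteq \ASpec(R)$ with $\{\widetilde{\pid}(g) \mid \ASupp(R/\pid(g)) \subseteq \mathcal{U}\}$, applies it to the open class $\ASupp(\ker F)$, and then converts the condition $\ASupp(R/\pid(g)) \subseteq \ASupp(\ker F)$ into $R/\pid(g) \in \ker F$ via Remark \ref{remark:geometric_criterion}, i.e., via Kanda's classification of Serre subcategories (Theorem \ref{theorem:Kandas_main_theorem}), which needs all objects to be noetherian. You instead argue both inclusions by hand: the inclusion ``$\supseteq$'' is definitional once Lemma \ref{lemma:monoform_class} is invoked, and for ``$\subseteq$'' you use only that subquotients of objects killed by the exact $F$ are again killed, plus the filtration argument behind Corollary \ref{corollary:monoform_reps}, which hands you a representative $R/\pid(g)$ \emph{embedded} in the monoform subquotient, so exactness kills it too. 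This buys a shorter, more elementary proof that bypasses the classification theorem entirely (only exactness of $F$ and Theorem \ref{theorem:structure_graded_module} are used), at the cost of not exhibiting the reusable topological statement of Lemma \ref{lemma:aux}; the paper's detour makes the open-set/Serre-subcategory picture explicit, which is the viewpoint it exploits throughout Section \ref{section:applications}. Your closing remark on reading the right-hand side existentially in the representative $(\pid,g)$ is exactly the subtlety the paper records in its Caveat after Lemma \ref{lemma:aux}, so your handling of it matches the intended interpretation.
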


We will need the following lemma for its proof.

\begin{lemma}\label{lemma:aux}
 Let $\mathcal{U} \subseteq \Spec(R)$ be an open set.
 Then
 \[
  \mathcal{U} = \big\{ \widetilde{\pid}(g) \in \Spec(R) \mid \ASupp( R/\pid(g) ) \subseteq \mathcal{U} \big\}
 \]
\end{lemma}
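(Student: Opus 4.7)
The plan is to prove the two inclusions separately. The inclusion $\supseteq$ is immediate: if $a = \widetilde{\pid}(g)$ satisfies $\ASupp(R/\pid(g)) \subseteq \mathcal{U}$, then Lemma \ref{lemma:monoform_class} tells us that $R/\pid(g)$ is itself monoform and represents $a$, so $a \in \ASupp(R/\pid(g)) \subseteq \mathcal{U}$.

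The substantive direction is $\subseteq$. Given $a \in \mathcal{U}$, I would exploit the fact that the sets $\ASupp(A)$ for $A \in R\modl$ form a basis of the topology on $\ASpec(R)$ to obtain some $A \in R\modl$ with $a \in \ASupp(A) \subseteq \mathcal{U}$. Unpacking the definition of atom support, $a = \overline{B}$ for some monoform subquotient $B$ of $A$. Applying the proof of Corollary \ref{corollary:monoform_reps} (which relies on the structure Theorem \ref{theorem:structure_graded_module}) to the monoform module $B$, I obtain a submodule $R/\pid(g) \hookrightarrow B$ satisfying $\widetilde{\pid}(g) = a$. Since $R/\pid(g)$ is then a subquotient of $A$, and since atom support is monotone under passing to subquotients (every monoform subquotient of a subquotient is a subquotient of the ambient object), I conclude $\ASupp(R/\pid(g)) \subseteq \ASupp(A) \subseteq \mathcal{U}$, exhibiting $a$ as an element of the right-hand side via the representative $(\pid, g)$.

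The main subtlety I anticipate is interpretive rather than technical: a given atom admits many representations $a = \widetilde{\pid}(g)$ as $g$ ranges over a coset of $G_{\pid}$, and the condition $\ASupp(R/\pid(g)) \subseteq \mathcal{U}$ is generally \emph{not} stable when one replaces one representative by another, as shifting a module by an element of $G_{\pid}$ typically changes the atom support in the fibers over the primes properly containing $\pid$. Consequently, the proof must not attempt to verify the condition for an arbitrary a priori representative, but instead must produce a specific $(\pid,g)$ adapted to the basic neighborhood $\ASupp(A)$ of $a$ in $\mathcal{U}$. Once this existential choice is made via the structure theorem, the monotonicity of $\ASupp$ under subquotients finishes the argument without further difficulty.
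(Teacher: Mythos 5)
Your proof is correct and follows essentially the same route as the paper: both reduce via the basis property of the sets $\ASupp(A)$ to a basic open $\ASupp(A)\subseteq\mathcal{U}$, then use the structure theorem (through Corollary \ref{corollary:monoform_reps}) to replace a monoform subquotient representing the atom by a cyclic submodule $R/\pid(g)$, and conclude by monotonicity of atom support under subquotients; your remark about the existential choice of representative is exactly the paper's subsequent Caveat. The only difference is cosmetic: the paper first reduces to $M=R/\pid(g)$ using the filtration of Theorem \ref{theorem:structure_graded_module} and compatibility of $\ASupp$ with extensions, a step your more direct argument shows to be dispensable.
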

\begin{proof}
We denote the set on the right hand side by $\mathcal{U}'$.

$\mathcal{U} \supseteq \mathcal{U}'$:
Given $\widetilde{\pid}(g) \in \Spec(R)$ such that $\ASupp( R/\pid(g) ) \subseteq \mathcal{U}$,
it follows that $\widetilde{\pid}(g) \in \mathcal{U}$ since $\widetilde{\pid}(g) \in \ASupp( R/\pid(g) )$.

$\mathcal{U} \subseteq \mathcal{U}'$:
Since $\mathcal{U}$ is open,
it suffices to prove that given
$M \in R\modl$ such that $\ASupp(M) \subseteq \mathcal{U}$,
we already have $\ASupp(M) \subseteq \mathcal{U}'$.
Since a module $M$ admits a finite filtration
by modules of the form $R/\pid(g)$ for primes $\pid \subseteq R$ 
and elements $g \in G$ (see Theorem \ref{theorem:structure_graded_module}),
and since the union of atom supports
is compatible with extensions of objects \cite[Proposition 3.3]{Kanda12},
we can assume $M = R/\pid(g)$.
For every subquotient $N$ of $M$, we have
\[
 \ASupp(N) \subseteq \ASupp(M) \subseteq \mathcal{U}.
\]
But this entails that every point in $\ASupp(M)$
already lies in $\mathcal{U}'$.
\end{proof}

\begin{caveat}
 The previous Lemma \ref{lemma:aux} does not imply that
 for every point $\widetilde{\pid}(g) \in \mathcal{U}$,
 we have $\ASupp( R/\pid(g) ) \subseteq \mathcal{U}$,
 but only that there exists $h \in g + G_{\pid}$
 such that $\ASupp( R/\pid(h) ) \subseteq \mathcal{U}$ (see Lemma \ref{lemma:monoform_uniqueness}).
\end{caveat}

\begin{proof}[Proof of Theorem \ref{theorem:general_desc}]
 Using Lemma \ref{lemma:aux}, we get
 \[
  \ASupp( \ker F ) = \big\{ \widetilde{\pid}(g) \in \ASpec(S) \mid \ASupp( R/\pid(g) ) \subseteq \ASupp( \ker F ) \big\}.
 \]
 But we know by Remark \ref{remark:geometric_criterion} that
 \[
  \ASupp( R/\pid(g) ) \subseteq \ASupp( \ker F )
  \hspace{1em}\Longleftrightarrow\hspace{1em}
  R/\pid(g) \in \ker F,
 \]
 which completes the proof.
\end{proof}

\subsection{The atom support of the kernel of the homogeneous localization functor}\label{subsection:single_localization}

Let $f \in R\setminus \{0\}$. We study the atom support of the kernel $\KC_f$ of
the exact homogeneous localization functor:
\[
 (-_f)_0: R\modl \rightarrow (R_f)_0\modl: M \mapsto (M_f)_0.
\]
Motivated by Theorem \ref{theorem:general_desc},
which tells us that
\[
 \ASupp( \KC_f ) = \big\{ \widetilde{\pid}(g) \mid (R/\pid(g)_f)_0 \simeq 0 \big\}
\]
we draw our attention to primes $\pid \subseteq R$
and elements $g \in G$ for which we have $R/\pid(g) \in \KC_f$.

\begin{lemma}\label{lemma:when_zero}
 Given a prime $\pid \subseteq R$ and an element $g \in G$, then
 \[
  R/\pid(g) \in \KC_f \hspace{1em}\Longleftrightarrow\hspace{1em} (f \in \pid) \vee \big(g \not\in \deg( R/\pid ) + \langle \deg(f) \rangle_{\Z}\big)
 \]
\end{lemma}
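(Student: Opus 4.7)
The plan is to unfold the definition of $\KC_f$ and analyze the graded structure of the localization $(R/\pid)_f$ case by case. First, since homogeneous localization commutes with shifts (the degree-$0$ part of the shift $(M(g))_f$ is the degree-$g$ part of $M_f$), I would reduce the question to when the degree-$g$ component of $(R/\pid)_f$ vanishes:
\[
 R/\pid(g) \in \KC_f \hspace{1em}\Longleftrightarrow\hspace{1em} \big((R/\pid)_f\big)_g \simeq 0.
\]

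Next I would split into the two cases dictated by the right-hand side. If $f \in \pid$, then $f$ represents zero in $R/\pid$, so $(R/\pid)_f$ is the zero ring, its degree-$g$ part is zero, and the lemma holds trivially on both sides. If $f \notin \pid$, then $R/\pid$ is an integral domain in which $f$ is a nonzero homogeneous element, so $(R/\pid)_f$ is a nonzero $G$-graded ring. The task then becomes to describe its set of occurring degrees and to observe that $((R/\pid)_f)_g \simeq 0$ is equivalent to $g \notin \deg((R/\pid)_f)$.

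The key computation is the identity
\[
 \deg\big((R/\pid)_f\big) = \deg(R/\pid) + \langle \deg(f) \rangle_{\Z}.
\]
For $\subseteq$, every nonzero homogeneous element of $(R/\pid)_f$ has the form $r/f^n$ with $r \in R/\pid$ nonzero homogeneous and $n \in \N_0$, and hence degree $\deg(r) - n\deg(f) \in \deg(R/\pid) + \langle\deg(f)\rangle_{\Z}$. For $\supseteq$, I would use that $\deg(R/\pid)$ is a submonoid of $G$ containing $\deg(f)$ (because $f \notin \pid$ implies $f \neq 0$ in the domain $R/\pid$): any element of $\deg(R/\pid) + \langle \deg(f) \rangle_{\Z}$ can be written as $\deg(r) + k\deg(f)$ with $k \in \Z$, and this is realized in $(R/\pid)_f$ by $r f^{k+N}/f^N$ for any integer $N \geq \max(0, -k)$.

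The main technical point, and the only real obstacle, is the direction $\supseteq$: one must argue that although $\deg(R/\pid)$ is only a submonoid rather than a subgroup, inverting $f$ does in fact promote $\langle \deg(f) \rangle_{\Z}$ to a group of available shifts, and this uses precisely that $f$ is a nonzero-divisor in the domain $R/\pid$. Once the degree identity is established, the equivalence claimed in the lemma is immediate.
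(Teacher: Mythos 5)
Your proposal is correct and follows essentially the same route as the paper: split into the cases $f \in \pid$ and $f \notin \pid$, and in the latter compute $\deg\big((R/\pid)_f\big) = \deg(R/\pid) + \langle \deg(f) \rangle_{\Z}$ using that powers of $f$ remain nonzero in the domain $R/\pid$. Your explicit witness $r f^{k+N}/f^N$ for the inclusion $\supseteq$ and the shift-to-degree-$g$ reformulation are just more detailed versions of what the paper leaves implicit.
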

\begin{proof}
 If $f \in \pid$, then $(R/\pid)_f \simeq 0$, so in particular, its $g$-th degree part is zero.
 If $f \not\in \pid$, then for all $i \in \Z$,
 the element $f^i \in (R/\pid)_f$ is not zero, since $\pid$ is a prime.
 In particular,
 \[
  \deg\left(  R/\pid(g)_f \right) = \deg( R/\pid(g) ) + \langle \deg(f) \rangle_{\Z}.
 \]
 In this case, $(R/\pid(g)_f)_0 \simeq 0$ if and only if $0 \not\in \deg\left(  R/\pid(g)_f \right)$
 if and only if $0 \not\in \deg( R/\pid(g) ) + \langle \deg(f) \rangle_{\Z}$,
 which is equivalent to the condition in the claim.
\end{proof}

Due to Lemma \ref{lemma:when_zero}, we can write $\ASupp( \KC_f )$
as a union of two parts:
\begin{align*}
 \ASupp( \KC_f ) &= \big\{ \widetilde{\pid}(g) \mid (R/\pid(g)_f)_0 \simeq 0 \big\} \\
 &= \big\{ \widetilde{\pid}(g) \mid f \in \pid \big\} \cup \big\{ \widetilde{\pid}(g) \mid g \not\in \deg( R/\pid ) + \langle \deg(f) \rangle_{\Z} \big\} \\
 &= \pia^{-1}\big( \Supp( R/\langle f \rangle ) \big) \cup \big\{ \widetilde{\pid}(g) \mid g \not\in \deg( R/\pid ) + \langle \deg(f) \rangle_{\Z} \big\}
\end{align*}
The first subset is open by the continuity of the atom projection (Theorem \ref{theorem:main}).
The second subset in this union depends only on the degree of $f$, but not
on $f$ itself.

\begin{definitionandlemma}\label{lemma:open}
 For every $d \in G$, the subset
 \[
  \mathcal{U}_{d} := \big\{ \widetilde{\pid}(g) \mid g \not\in \deg( R/\pid ) + \langle \deg(f) \rangle_{\Z} \big\} \subseteq \ASpec(R)
 \]
 is open.
\end{definitionandlemma}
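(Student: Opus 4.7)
The plan is to show openness by exhibiting, for each $\widetilde{\pid}(g) \in \mathcal{U}_d$, a basic open set of the form $\ASupp(R/\pid(g))$ that both contains $\widetilde{\pid}(g)$ and sits inside $\mathcal{U}_d$. First I would pick a representative $g \in G$ of the atom satisfying $g \not\in \deg(R/\pid) + \langle d \rangle_{\Z}$, which exists by the defining condition of $\mathcal{U}_d$. Since $R/\pid(g)$ is monoform by Lemma \ref{lemma:monoform_class}, its atom class lies in $\ASupp(R/\pid(g))$, so the whole content of the proof is the inclusion $\ASupp(R/\pid(g)) \subseteq \mathcal{U}_d$.

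To establish this inclusion, I would pick an arbitrary $\widetilde{\qid}(h) \in \ASupp(R/\pid(g))$ with monoform subquotient $N$ representing it. Applying Theorem \ref{theorem:structure_graded_module} to $N$ produces a submodule of the form $R/\qid(h')$ with $h' + G_{\qid} = h + G_{\qid}$, and this $R/\qid(h')$ is consequently a subquotient of $R/\pid(g)$. The key computation is then a routine degree-set comparison: subquotients have smaller degree sets, so
\[
 \deg(R/\qid) + (g - h') \;\subseteq\; \deg(R/\pid).
\]
If one had $h' = m + i\,d$ for some $m \in \deg(R/\qid)$ and $i \in \Z$, substituting this particular $m$ into the displayed inclusion would give $g - i\,d \in \deg(R/\pid)$, whence $g \in \deg(R/\pid) + \langle d \rangle_{\Z}$, contradicting the choice of $g$. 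Therefore $h' \not\in \deg(R/\qid) + \langle d \rangle_{\Z}$, and $\widetilde{\qid}(h) = \widetilde{\qid}(h') \in \mathcal{U}_d$.

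The main subtlety of the proof is that the condition $g \not\in \deg(R/\pid) + \langle d \rangle_{\Z}$ is \emph{not} invariant under $g \mapsto g + G_{\pid}$, so the notation in the definition of $\mathcal{U}_d$ must be read as asserting the existence of some representative satisfying the condition. Accordingly, the entire argument hinges on selecting the right representative of each atom in play --- the chosen $g$ for $\widetilde{\pid}(g)$ at the outset, and the submodule-embedding representative $h'$ for $\widetilde{\qid}(h)$ extracted from Theorem \ref{theorem:structure_graded_module} --- so that an actual inclusion of shifted cyclic modules is available to translate into the degree-set comparison.
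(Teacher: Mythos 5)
Your proof is correct and follows essentially the same route as the paper: choose a representative $g$ of the atom with $g \not\in \deg(R/\pid) + \langle d \rangle_{\Z}$, and show the basic open neighborhood $\ASupp(R/\pid(g))$ lies in $\mathcal{U}_d$ via the degree-set inclusion for subquotients together with Theorem \ref{theorem:structure_graded_module}. The only difference is that you spell out explicitly the choice of the submodule representative $R/\qid(h')$ and the representative-dependence of the defining condition, which the paper leaves implicit (and acknowledges in its Caveat following Lemma \ref{lemma:aux}).
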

\begin{proof}
 Let $\pid \subseteq R$ be a prime and $g \in G$ such that $0 \not\in \deg( R/\pid(g) ) + \langle \deg(f) \rangle_{\Z}$.
 For any subquotient $M$ of $R/\pid(g)$, we have $\deg(M) \subseteq \deg( R/\pid(g))$ and thus
 \[
  0 \not\in \deg(M) + \langle d \rangle_{\Z} \subseteq \deg( R/\pid(g) ) + \langle d \rangle_{\Z}.
 \]
 It follows that the open neighborhood $\ASupp( R/\pid(g) )$ of $\widetilde{\pid}(g)$ lies in $\mathcal{U}_{d}$.
\end{proof}

In summary, we get the following description of $\ASupp( \KC_f )$:

\begin{theorem}\label{theorem:kernel_decomposition}
 The atom support of the kernel $\KC_f$ of the homogeneous localization functor $(-_f)_0$
 can be written as the following union of open subsets:
 \[
  \ASupp( \KC_f ) = \pia^{-1}\big( \Supp( R/\langle f \rangle ) \big) \cup \mathcal{U}_{\deg(f)}.
 \]
\end{theorem}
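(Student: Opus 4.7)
The plan is essentially to collect the ingredients already laid out in the paragraphs immediately preceding the statement. First, I would invoke Theorem \ref{theorem:general_desc} to rewrite
\[
 \ASupp(\KC_f) = \big\{ \widetilde{\pid}(g) \mid (R/\pid(g)_f)_0 \simeq 0 \big\}.
\]
Then the characterization in Lemma \ref{lemma:when_zero} turns the condition $(R/\pid(g)_f)_0 \simeq 0$ into the disjunction $f \in \pid$ or $g \not\in \deg(R/\pid) + \langle \deg(f)\rangle_{\Z}$, which translates directly into the set-theoretic union claimed in the theorem. Specifically, the primes satisfying the first disjunct are exactly those in $\Supp(R/\langle f\rangle)$ (by definition of support), so collecting all atoms lying over them gives $\pia^{-1}(\Supp(R/\langle f\rangle))$, while the atoms satisfying the second disjunct give precisely $\mathcal{U}_{\deg(f)}$ as defined in Definition and Lemma \ref{lemma:open}.

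The remaining task is to verify that each of the two sets in the union is open in $\ASpec(R)$. For the first set, this follows from part (3) of Theorem \ref{theorem:main}: the atom projection is continuous in the sense that preimages of subsets of the form $\Supp(M)$ are open, and $\Supp(R/\langle f\rangle)$ is of this form. For the second set, openness is exactly the content of Definition and Lemma \ref{lemma:open}.

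There is no real obstacle here; the theorem is a synthesis statement assembled from \ref{theorem:general_desc}, \ref{lemma:when_zero}, \ref{theorem:main}(3), and \ref{lemma:open}. The only thing I would double-check while writing the proof is that the two disjuncts in Lemma \ref{lemma:when_zero} are indeed independent of the chosen representative $\widetilde{\pid}(g)$ (i.e., are well-defined as subsets of $\ASpec(R)$), since different representatives can shift $g$ by an element of $G_{\pid}$, but $G_{\pid} \subseteq \deg(R/\pid) + \langle \deg(f)\rangle_{\Z}$, so the condition is stable under such shifts. Once this is noted, the proof reduces to one short paragraph citing the four results above.
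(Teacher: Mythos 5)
Your main line of argument is exactly the paper's: Theorem \ref{theorem:kernel_decomposition} is stated there as a summary of the computation that combines Theorem \ref{theorem:general_desc} with Lemma \ref{lemma:when_zero}, identifies the first disjunct with $\pia^{-1}\big(\Supp(R/\langle f\rangle)\big)$ (open by Theorem \ref{theorem:main}(3)) and the second with $\mathcal{U}_{\deg(f)}$ (open by Lemma and Definition \ref{lemma:open}), so your synthesis is correct and matches the paper.

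One caveat about your final ``double-check'': the inclusion $G_{\pid} \subseteq \deg(R/\pid) + \langle \deg(f)\rangle_{\Z}$ you invoke is false in general, and the condition defining $\mathcal{U}_{\deg(f)}$ is genuinely \emph{not} stable under replacing $g$ by $g+k$ with $k \in G_{\pid}$. For instance, take $G = \Z^2$, $R = \C[x_0,x_1]$ with $\deg(x_0)=(1,0)$, $\deg(x_1)=(0,1)$, $\pid = 0$ and $f = x_0$: then $\deg(R/\pid)+\langle\deg(f)\rangle_{\Z} = \Z\times\N_0 \subsetneq \Z^2 = G_{\pid}$, and $\widetilde{\pid}\big((0,-1)\big) = \widetilde{\pid}\big((0,0)\big)$ although only the first representative satisfies the condition. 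This does not harm the proof, because sets such as $\mathcal{U}_{d}$ and the right-hand side in Theorem \ref{theorem:general_desc} are to be read as ``there exists a representative $(\pid,g)$ satisfying the condition'' (the image of the corresponding set of pairs), and images commute with unions; the paper makes this representative-dependence explicit in the Caveat following Lemma \ref{lemma:aux} and in the phrase ``we may assume that $g$ is chosen such that'' in the proof of Lemma \ref{lemma:technical_lemma}. So you should simply drop the claimed stability rather than rely on it.
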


\begin{remark}
 The open set
 \[
  \pia^{-1}\big( \Supp( R/\langle f \rangle ) \big)
 \]
 in Theorem \ref{theorem:kernel_decomposition}
 corresponds to the kernel of $(-_f)$, i.e.,
 localization without taking the $0$-th degree part.
 So, the occurrence of $\mathcal{U}_{\deg(f)}$
 is a peculiarity of \emph{homogeneous} localization.
\end{remark}

\subsection{Proof of the main theorem}\label{subsection:proof_main_theorem}

In this subsection, we prove Theorem \ref{theorem:main2},
which is a variant of the main Theorem \ref{theorem:main3},
but with the advantage that it also works outside the context of
toric geometry, and that it easily implies the main Theorem \ref{theorem:main3}.

Remember that we have $R = R_0[x_0, \dots, x_n]$ a $G$-graded
ring in the sense of Section \ref{section:graded_atom}.
For a subset  $\sigma \subseteq \{0, \dots, n\}$, we define $\widehat{\sigma} := \{0, \dots, n\} \setminus \sigma$ and
\[
x^{\widehat{\sigma}} := \prod_{i \not\in \sigma} x_i \in R. 
\]

\begin{lemma}\label{lemma:technical_lemma}
 Let $\sigma \subseteq \{0, \dots, n\}$ and assume that $x^{\widehat{\tau}} \neq 0$ for all $\tau \subseteq \sigma$.
 Then
 \[
  \bigcap_{\tau \subseteq \sigma} \ASupp\big( \KC_{x^{\widehat{\tau}}} \big) = \pia^{-1}\big( \Supp( R/\langle x^{\widehat{\sigma}} \rangle ) \big) \cup \mathcal{U}_R,
 \]
 where we set $\KC_{x^{\widehat{\tau}}} := \kernel( (-_{x^{\widehat{\tau}}})_0 )$.
\end{lemma}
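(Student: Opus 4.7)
The plan is to apply Theorem \ref{theorem:kernel_decomposition} to rewrite each $\ASupp(\KC_{x^{\widehat{\tau}}})$ as $\pia^{-1}\big(\Supp(R/\langle x^{\widehat{\tau}}\rangle)\big) \cup \mathcal{U}_{\deg(x^{\widehat{\tau}})}$, and then analyze the intersection by splitting cases according to whether $x^{\widehat{\sigma}}$ lies in a given prime $\pid$.

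For the inclusion $\supseteq$, I first note that $\tau \subseteq \sigma$ forces $\widehat{\sigma} \subseteq \widehat{\tau}$, so $x^{\widehat{\sigma}}$ divides $x^{\widehat{\tau}}$ and hence $\Supp(R/\langle x^{\widehat{\sigma}}\rangle) \subseteq \Supp(R/\langle x^{\widehat{\tau}}\rangle)$. This immediately gives $\pia^{-1}\big(\Supp(R/\langle x^{\widehat{\sigma}}\rangle)\big) \subseteq \ASupp(\KC_{x^{\widehat{\tau}}})$ for all $\tau \subseteq \sigma$. To handle $\mathcal{U}_R$, I pick $\widetilde{\pid}(g) \in \mathcal{U}_R$ (so $g \notin G_{\pid}$) and fix $\tau \subseteq \sigma$. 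If $x^{\widehat{\tau}} \in \pid$ we land in the first summand; otherwise all $x_i$ with $i \notin \tau$ are outside $\pid$, so $\deg(x^{\widehat{\tau}}) = \sum_{i \notin \tau}\deg(x_i) \in G_{\pid}$, whence $\deg(R/\pid) + \langle \deg(x^{\widehat{\tau}})\rangle_{\Z} \subseteq G_{\pid}$ and $g$ is not in this set, placing $\widetilde{\pid}(g)$ in $\mathcal{U}_{\deg(x^{\widehat{\tau}})}$.

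For the harder inclusion $\subseteq$, I take $\widetilde{\pid}(g)$ in the left-hand intersection and assume $x^{\widehat{\sigma}} \notin \pid$; I must then show $g \notin G_{\pid}$. Because $\pid$ is prime, $x^{\widehat{\sigma}} \notin \pid$ means $I := \{i : x_i \in \pid\} \subseteq \sigma$. Setting $\tau := I$ gives $\tau \subseteq \sigma$ with $x^{\widehat{\tau}} \notin \pid$, so $\widetilde{\pid}(g) \in \ASupp(\KC_{x^{\widehat{I}}})$ forces $g \notin \deg(R/\pid) + \langle \deg(x^{\widehat{I}})\rangle_{\Z}$. The main obstacle here — and the only nontrivial algebraic step in the argument — is to identify this set with $G_{\pid}$ itself. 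By Lemma \ref{lemma:characterization_Gp}, $G_{\pid} = \langle \deg(x_i) : i \notin I\rangle_{\Z}$, and $\deg(R/\pid)$ equals the $\N_0$-span of the same generators. Given any $\Z$-combination $h = \sum_{i \notin I} a_i \deg(x_i) \in G_{\pid}$, subtracting $k\cdot \deg(x^{\widehat{I}}) = \sum_{i\notin I} k\,\deg(x_i)$ for $k := \min_i a_i$ leaves all coefficients non-negative, exhibiting $h$ as an element of $\deg(R/\pid) + \langle \deg(x^{\widehat{I}})\rangle_{\Z}$. The reverse inclusion is obvious, giving equality; therefore $g \notin G_{\pid}$ and $\widetilde{\pid}(g) \in \mathcal{U}_R$.

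Combining both cases (either $x^{\widehat{\sigma}} \in \pid$, placing the point in $\pia^{-1}\big(\Supp(R/\langle x^{\widehat{\sigma}}\rangle)\big)$, or else $\widetilde{\pid}(g) \in \mathcal{U}_R$ by the previous paragraph) yields the desired decomposition. Throughout, the hypothesis $x^{\widehat{\tau}} \neq 0$ for $\tau \subseteq \sigma$ is what licenses the use of Theorem \ref{theorem:kernel_decomposition} and Lemma \ref{lemma:when_zero} for every relevant $\tau$, in particular for $\tau = I$.
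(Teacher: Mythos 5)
Your proof is correct and takes essentially the same route as the paper: rewrite each $\ASupp\big(\KC_{x^{\widehat{\tau}}}\big)$ via Theorem \ref{theorem:kernel_decomposition}, prove $\supseteq$ by the case split $x^{\widehat{\tau}} \in \pid$ versus $\deg(x^{\widehat{\tau}}) \in G_{\pid}$, and prove $\subseteq$ by choosing $\tau = \{ i \mid x_i \in \pid\}$ and computing $\deg(R/\pid) + \langle \deg(x^{\widehat{\tau}}) \rangle_{\Z} = G_{\pid}$. One minor wording point: membership of the atom in $\mathcal{U}_{\deg(x^{\widehat{I}})}$ only guarantees the degree condition for \emph{some} representative $h \in g + G_{\pid}$ rather than for $g$ itself (this is exactly the paper's caveat after Lemma \ref{lemma:aux}), but since you identify the excluded set with the subgroup $G_{\pid}$, whose cosets are representative-independent, the conclusion $g \notin G_{\pid}$ stands.
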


\begin{remark}
 Note that the statement of Lemma \ref{lemma:technical_lemma}
 is a peculiarity of \emph{homogeneous} localization.
 The intersection of
 the kernels of the functors $(-_{x^{\widehat{\tau}}})$ for all $\tau \subseteq \sigma$, i.e., without taking the $0$-th degree part,
 would simply be equal to $\kernel( (-_{x^{\widehat{\sigma}}}) )$.
\end{remark}

\begin{proof}[Proof of Lemma \ref{lemma:technical_lemma}]
 Throughout this proof, we will think of $\ASupp( \KC_{x^{\widehat{\tau}}} )$
 as the union
 \[
  \ASupp( \KC_{x^{\widehat{\tau}}} ) = \pia^{-1}\big( \Supp( R/\langle x^{\widehat{\tau}} \rangle ) \big) \cup \mathcal{U}_{\deg(x^{\widehat{\tau}})}
 \]
 that was given by Theorem \ref{theorem:kernel_decomposition}.
 Now, call $\mathcal{I}_l$ the left hand side of the equation in the statement of the lemma and $\mathcal{I}_r$ the right
 hand side.
 
 $\mathcal{I}_l \supseteq \mathcal{I}_r$:
 Since 
 \[
 \Supp( R/\langle x^{\widehat{\sigma}} \rangle ) \subseteq \Supp( R/\langle x^{\widehat{\tau}} \rangle ) 
 \]
 for all $\tau \subseteq \sigma$, 
 it follows that 
 \[
  \pia^{-1}\big( \Supp( R/\langle x^{\widehat{\sigma}} \rangle ) \big) \subseteq \mathcal{I}_l.
 \]
 Next, let $\widetilde{\pid}(g)$ be a non-standard point.
 Given $\tau \subseteq \sigma$, we distinguish two cases.
 First, assume that there exists $j \in \widehat{\tau}$ such that $x_j \in \pid$.
 In this case, 
 \[
  \widetilde{\pid}(g) \in \pia^{-1}\big( \Supp( R/\langle x^{\widehat{\tau}} \rangle ) \big) \subseteq \ASupp( \KC_{^{\widehat{\tau}}} ).
 \]
 Second, assume that $x_j \not\in \pid$ for all $j \in \widehat{\tau}$.
 In that case,
 \[
  g \not\in G_{\pid} = \langle \deg( R/ \pid) \rangle_{\Z} \supseteq \deg( R/ \pid) + \langle \deg( x^{\widehat{\tau}} ) \rangle_{\Z}
 \]
 since $\deg( x^{\widehat{\tau}} ) = \sum_{j \in {\widehat{\tau}}}\deg(x_j) \in \langle \deg( R/ \pid) \rangle_{\Z}$ (note that $x^{\widehat{\tau}} \neq 0$).
 Thus, 
 \[
 \widetilde{\pid}(g) \in \mathcal{U}_{\deg(x^{\widehat{\tau}})} \subseteq \ASupp( \KC_{{\widehat{\tau}}} ).
 \]
 
 $\mathcal{I}_l \subseteq \mathcal{I}_r$:
 The idea is to prove that given an element of the form
 \[
  \widetilde{\pid}(g) \in \ASupp( \KC_{x^{\widehat{\sigma}}} ) \setminus \mathcal{I}_r,
 \]
 we can find a $\tau \subseteq \sigma$ such that $\widetilde{\pid}(g) \not\in \ASupp( \KC_{x^{\widehat{\tau}}} )$.
 
 Since $\widetilde{\pid}(g) \in \mathcal{U}_{\deg( x^{\widehat{\sigma}} )}$, we may assume that $g$ is chosen such that
 \[
  g \not\in \deg( R/\pid ) + \langle \deg( x^{\widehat{\sigma}} ) \rangle_{\Z}.
 \]
 Since $\widetilde{\pid}(g)$ is a standard point, $g \in G_{\pid}$, and thus
 \[
 g \in G_{\pid} \setminus ( \deg( R/\pid ) + \langle \deg( x^{\widehat{\sigma}} ) \rangle_{\Z} ).
 \]
 Next, we set $\tau := \{ i \in \{0, \dots n\} \mid x_i \in \pid \}$. We have $\tau \subseteq \sigma$
 since $\pid \not\in \Supp( R/\langle x^{\widehat{\sigma}} \rangle )$ and thus $x_i \not\in \pid$ for all $i \in \widehat{\sigma}$.
 Now, it suffices to prove that 
 \[
  \widetilde{\pid}(g) \not\in \ASupp( \KC_{x^{\widehat{\tau}}} ).
 \]
 Clearly, $\widetilde{\pid}(g) \not\in \pia^{-1}\big( \Supp( R/\langle x^{\widehat{\tau}} \rangle ) \big)$ since $x_j \not\in \pid$ for all $j \in \widehat{\tau}$.
 So, let us assume that we have $\widetilde{\pid}(g) \in \mathcal{U}_{\deg(x^{\widehat{\tau}})}$. Then
 \[
  \exists h \in g + G_{\pid} = G_{\pid}
 \]
 such that
 \[
  h \not\in \deg( R/\pid ) + \langle \deg(x^{\widehat{\tau}}) \rangle_{\Z}.
 \]
 But we can compute
 \begin{align*}
  \deg( R/\pid ) + \langle \deg(x^{\widehat{\tau}}) \rangle_{\Z} &= \langle \deg( x_j ) \mid j \in \widehat{\tau} \rangle_{\N_0} + \langle \sum_{i \in \widehat{\tau}}\deg( x^i ) \rangle_{\Z}\\
  &= \langle \deg( x_j ) \mid j \in \widehat{\tau} \rangle_{\Z}\\
  &= G_{\pid}
 \end{align*}
 and thus get $h \not\in G_{\pid}$, a contradiction.
\end{proof}

We are ready to prove our generalized version of Theorem \ref{theorem:main3}.

\begin{theorem}\label{theorem:main2}
 Let $R$ be $G$-graded ring (in the sense of Section \ref{section:graded_atom}).
 Let $\Sigma$ be a subset of the powerset of $\{0, \dots, n\}$ that is closed under taking subsets.
 Assume that
 \[
  x^{\widehat{\sigma}} := \prod_{i \not\in \sigma}x_i \neq 0
 \]
 for all $\sigma \in \Sigma$.
 Then 
 \[
  \bigcap_{\sigma \in \Sigma} \ASupp\big( \KC_{x^{\widehat{\sigma}}} \big) = \pia^{-1}\big( \Supp( R/B(\Sigma) ) \big) \cup \mathcal{U}_R,
 \]
 where we set $\KC_{x^{\widehat{\sigma}}} := \kernel( (-_{x^{\widehat{\sigma}}})_0 )$
 and
 $B(\Sigma) := \langle x^{\widehat{\sigma}} \mid \sigma \in \Sigma \rangle$.
\end{theorem}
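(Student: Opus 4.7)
The plan is to reduce Theorem~\ref{theorem:main2} to Lemma~\ref{lemma:technical_lemma} by reorganizing the intersection along the finitely many maximal elements of $\Sigma$. Writing $\Sigma^{\max}$ for this finite set of maximal subsets, downward closure of $\Sigma$ gives $\Sigma = \bigcup_{\sigma^* \in \Sigma^{\max}} \{\tau \mid \tau \subseteq \sigma^*\}$, so
\[
\bigcap_{\sigma \in \Sigma} \ASupp\big(\KC_{x^{\widehat{\sigma}}}\big) = \bigcap_{\sigma^* \in \Sigma^{\max}} \bigcap_{\tau \subseteq \sigma^*} \ASupp\big(\KC_{x^{\widehat{\tau}}}\big).
\]
Downward closedness together with the hypothesis $x^{\widehat{\sigma}} \neq 0$ for every $\sigma \in \Sigma$ ensures that the hypothesis of Lemma~\ref{lemma:technical_lemma} holds for each $\sigma^* \in \Sigma^{\max}$, so each inner intersection equals $\pia^{-1}\big(\Supp(R/\langle x^{\widehat{\sigma^*}}\rangle)\big) \cup \mathcal{U}_R$.

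Next I invoke the elementary distributive identity $\bigcap_i (A_i \cup U) = \big(\bigcap_i A_i\big) \cup U$, valid for any family of subsets $A_i$ and any fixed $U$, to pull the common $\mathcal{U}_R$ outside the intersection over $\Sigma^{\max}$. Since preimages commute with arbitrary intersections of subsets, it then suffices to establish the downstairs identity
\[
\bigcap_{\sigma^* \in \Sigma^{\max}} \Supp\big(R/\langle x^{\widehat{\sigma^*}}\rangle\big) = \Supp\big(R/B(\Sigma)\big).
\]
A homogeneous prime $\pid$ lies in the left-hand side iff $x^{\widehat{\sigma^*}} \in \pid$ for every maximal $\sigma^*$, whereas it lies in the right-hand side iff $B(\Sigma) \subseteq \pid$. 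These conditions agree because for every $\tau \subseteq \sigma^*$ the inclusion $\widehat{\sigma^*} \subseteq \widehat{\tau}$ shows that $x^{\widehat{\sigma^*}}$ divides $x^{\widehat{\tau}}$, so in fact $B(\Sigma) = \langle x^{\widehat{\sigma^*}} \mid \sigma^* \in \Sigma^{\max}\rangle$ as ideals.

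Essentially all of the non-formal content has been absorbed into Lemma~\ref{lemma:technical_lemma}; the remaining work is bookkeeping. The only point I expect to need care is that the shared piece $\mathcal{U}_R$ appearing in each inner intersection is genuinely the \emph{same} set of non-standard points regardless of the chosen $\sigma^* \in \Sigma^{\max}$, so that the distributive identity applies cleanly. This is guaranteed by Lemma~\ref{lemma:technical_lemma}, which delivers \emph{all} of $\mathcal{U}_R$ as soon as one intersects the kernels of the homogeneous localizations over an entire downset $\{\tau \mid \tau \subseteq \sigma^*\}$, independently of the specific maximal $\sigma^*$.
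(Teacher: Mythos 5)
Your proposal is correct and follows essentially the same route as the paper: both reduce to Lemma~\ref{lemma:technical_lemma} via the downward closedness of $\Sigma$, then pull $\mathcal{U}_R$ out of the intersection, commute $\pia^{-1}$ with the intersection, and identify $\bigcap \Supp\big(R/\langle x^{\widehat{\sigma}}\rangle\big)$ with $\Supp\big(R/B(\Sigma)\big)$. The only (cosmetic) difference is that you apply the lemma just at the maximal elements of $\Sigma$, noting $B(\Sigma)=\langle x^{\widehat{\sigma^*}}\mid \sigma^*\in\Sigma^{\max}\rangle$, whereas the paper applies it at every $\sigma\in\Sigma$; both ways the bookkeeping goes through.
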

\begin{proof}
 \begin{align*}
  \bigcap_{\sigma \in \Sigma} \ASupp\big( \KC_{x^{\widehat{\sigma}}} \big) &=
  \bigcap_{\sigma \in \Sigma} \bigcap_{\tau \subseteq \sigma} \ASupp\big( \KC_{x^{\widehat{\tau}}} \big) &\\
  &= \bigcap_{\sigma \in \Sigma} \big( \pia^{-1}\big( \Supp( R/\langle x^{\widehat{\sigma}} \rangle ) \big) \cup \mathcal{U}_R \big)& \text{Lemma \ref{lemma:technical_lemma}} \\
  &= \big( \bigcap_{\sigma \in \Sigma}  \pia^{-1}\big( \Supp( R/\langle x^{\widehat{\sigma}} \rangle )\big) \big) \cup \mathcal{U}_R & \\
  &= \big( \pia^{-1}\big( \bigcap_{\sigma \in \Sigma} \Supp( R/\langle x^{\widehat{\sigma}} \rangle )\big) \big) \cup \mathcal{U}_R & \\
  &= \pia^{-1}\big( \Supp( R/B(\Sigma) ) \big) \cup \mathcal{U}_R & \qedhere\\
 \end{align*}
\end{proof}

\begin{proof}[Proof of Theorem \ref{theorem:main3}]
 The kernel $\KC$ of the sheafification functor is given by
 \[
  \KC = \bigcap_{\sigma \in \Sigma} \kernel( (-_{x^{\widehat{\sigma}}})_0 ).
 \]
 Thus,
 \[
  \ASupp( \KC ) = \bigcap_{\sigma \in \Sigma} \ASupp( \kernel( (-_{x^{\widehat{\sigma}}})_0 ) ),
 \]
 and from Theorem \ref{theorem:main2}, we conclude
 \[
  \ASupp( \KC ) = \pia^{-1}\big( \Supp( S/B(\Sigma) ) \big) \cup \mathcal{U}_S.
 \]
 Since
 \[
  \pia^{-1}\big( \Supp( S/B(\Sigma) ) \big) \cup \mathcal{U}_S =
  \big\{ \widetilde{\pid} \mid \pid \in \Supp(S/B(\Sigma)) \big\} \uplus \mathcal{U}_S,
 \] 
 we can finish the proof with
 the geometric criterion stated in Remark \ref{remark:geometric_criterion}.
\end{proof}

\nocite{FAC}

\def\cprime{$'$} \def\cprime{$'$} \def\cprime{$'$} \def\cprime{$'$}
  \def\cprime{$'$}
\providecommand{\bysame}{\leavevmode\hbox to3em{\hrulefill}\thinspace}
\providecommand{\MR}{\relax\ifhmode\unskip\space\fi MR }
\providecommand{\MRhref}[2]{%
  \href{http://www.ams.org/mathscinet-getitem?mr=#1}{#2}
}
\providecommand{\href}[2]{#2}

\end{document}